\def\blfootnote{\xdef\@thefnmark{}\@footnotetext}
\newtheorem{theorem}{Theorem}
\newtheorem{corollary}[theorem]{Corollary}
\newtheorem{lemma}[theorem]{Lemma}
\newtheorem{ques}[theorem]{Problem}
\theoremstyle{definition}
\newtheorem{definition}[theorem]{Definition}
\theoremstyle{remark}
\newtheorem{remark}[theorem]{Remark}
\newtheorem{example}[theorem]{Example}
\begin{document}

\title{Weakly amenable groups}

\author{Denis V. Osin}

\email{denis.osin@gmail.com}

\thanks{The work has been supported by the RFFR grant 99-01-00894
and by the Swiss National Science Foundation.}


\subjclass[2000]{Primary 20F05; Secondary 20F5, 22D10}
\date{}
\keywords{Left regular representation, amenable group, finitely
generated group, hyperbolic group, torsion group.}

\begin{abstract}
We construct the first examples of finitely
generated non--amenable groups whose left regular representations
are not uniformly isolated from the trivial representation.
\end{abstract}

\maketitle


\section{Introduction.}


Recall that a locally compact group $G$ is called amenable if
there exists a finitely additive measure $\mu $ on the set of all
Borel subsets of $G$ which is invariant under the left action of
the group $G$ on itself and satisfies $\mu (G)=1$. The class of
amenable groups, $AG$, has been introduced by von Neumann
\cite{Neu} in order to explain the Hausdorff--Banach--Tarski
paradox and was investigated by a number of authors.

One of the most interesting characterizations of amenable groups
was obtained by Hulaniski \cite{H} in terms of
$L^2$--representations.

\begin{definition} One says that the left regular representation $L_G$
of a
locally compact group $G$ on the Hilbert space $L^2(G)$ {\it
weakly contains the trivial representation}, if for any
$\varepsilon >0$ and any compact subset $S\subseteq G$, there
exists $v\in L^2(G)$ such that $\|v\|=1$ and
\begin{equation}
\left| \langle v, sv\rangle -1\right| < \varepsilon \label{wc}
\end{equation}
for any $s\in S$.
\end{definition}

\begin{theorem}[Hulaniski]  A locally compact group $G$ is amenable
if and only if the left regular representation of $G$ weakly
contains the trivial representation.
\end{theorem}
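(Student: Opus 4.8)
The plan is to prove Hulanicki's theorem by establishing the equivalence between the existence of an invariant mean (amenability) and the existence of almost-invariant unit vectors in $L^2(G)$ (weak containment of the trivial representation). I would work with a standard reformulation of amenability that is more tractable than the finitely additive measure definition: the \emph{Reiter property}, which asserts that for every compact $S\subseteq G$ and every $\varepsilon>0$ there exists a nonnegative $f\in L^1(G)$ with $\|f\|_1=1$ such that $\|sf-f\|_1<\varepsilon$ for all $s\in S$, where $(sf)(x)=f(s^{-1}x)$. The first step is therefore to recall (or cite) the equivalence between the existence of a left-invariant mean $\mu$ and the Reiter property; this is the classical passage from an invariant finitely additive measure to approximately invariant functions in $L^1$, carried out by a weak-$*$ approximation argument in $(L^\infty)^*$ together with a convexity (Mazur) argument to replace weak convergence by norm convergence.

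The heart of the proof is the $L^1$--$L^2$ transfer. For the direction ``amenable $\Rightarrow$ weak containment,'' I would start from a Reiter function $f$ that is $\varepsilon$-invariant in $L^1$ and set $v=f^{1/2}$, so that $v\in L^2(G)$ with $\|v\|_2^2=\|f\|_1=1$. The key estimate is the elementary pointwise inequality $|a^{1/2}-b^{1/2}|^2\le |a-b|$ for $a,b\ge 0$, which gives
\begin{equation}
\|sv-v\|_2^2=\int_G\bigl|f(s^{-1}x)^{1/2}-f(x)^{1/2}\bigr|^2\,dx\le\int_G|f(s^{-1}x)-f(x)|\,dx=\|sf-f\|_1.
\end{equation}
Hence $\|sv-v\|_2\le\varepsilon^{1/2}$ uniformly for $s\in S$, and expanding $\|sv-v\|_2^2=2-2\operatorname{Re}\langle sv,v\rangle$ yields $|\langle v,sv\rangle-1|\le\varepsilon/2$, which is exactly condition \eqref{wc} (after renaming $\varepsilon$). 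I would verify that $L_G$ acts on $v$ as $(L_G(s)v)(x)=v(s^{-1}x)$, so that the inner products in \eqref{wc} are genuinely those of the left regular representation.

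For the converse, ``weak containment $\Rightarrow$ amenable,'' I would reverse the transfer: given almost-invariant unit vectors $v\in L^2(G)$, put $f=|v|^2\in L^1(G)$, a probability density, and estimate $\|sf-f\|_1$ in terms of $\|sv-v\|_2$. Here the relevant inequality runs the other way, using $\bigl||v(s^{-1}x)|^2-|v(x)|^2\bigr|=|\,|sv|-|v|\,|\cdot(|sv|+|v|)$ followed by Cauchy--Schwarz, which bounds $\|sf-f\|_1\le\|sv-v\|_2\bigl(\|sv\|_2+\|v\|_2\bigr)=2\|sv-v\|_2$. Thus $\varepsilon$-invariance in $L^2$ produces $2\varepsilon$-invariance in $L^1$, giving the Reiter property and hence an invariant mean by a weak-$*$ limit of the functions $f$ in $(L^\infty)^*$. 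The main obstacle is not any single inequality but the careful handling of the passage between approximate invariance and exact invariance: constructing the genuine invariant mean requires taking a weak-$*$ cluster point of the net of $\varepsilon$-invariant densities as $\varepsilon\to0$ and $S$ exhausts $G$, and checking that the limit functional is positive, normalized, and left-invariant. I would also need to be attentive to the modular function on non-unimodular groups, ensuring that the manipulations with left translation of $L^1$ and $L^2$ functions are compatible with the chosen Haar measure; restricting the delicate computations to the unimodular case first, then noting the general case follows by the same scheme, keeps the argument clean.
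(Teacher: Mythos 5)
The paper does not prove this statement at all: Theorem 1.2 is quoted as a classical result of Hulanicki, with the proof delegated to the reference \cite{H}, so there is no argument in the paper to compare yours against. Your sketch is the standard and correct proof of that classical theorem: reduce amenability to Reiter's property $(P_1)$ via the Day--Namioka weak-$*$/Mazur convexity argument, then transfer between $L^1$ and $L^2$ almost-invariance using $v=f^{1/2}$ with the pointwise inequality $|a^{1/2}-b^{1/2}|^2\le|a-b|$ in one direction, and $f=|v|^2$ with
\begin{equation*}
\|sf-f\|_1\le \bigl\| \, |sv|-|v| \, \bigr\|_2\,\bigl\| \, |sv|+|v| \, \bigr\|_2\le 2\|sv-v\|_2
\end{equation*}
in the other, finishing with a weak-$*$ cluster point of the densities to produce the invariant mean. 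The constants and the passage between the paper's matrix-coefficient condition \eqref{wc} and the norm condition $\|sv-v\|<\sqrt{2\varepsilon}$ are handled correctly (the paper itself notes this implication in the introduction). Two small remarks: first, the equivalence of amenability with Reiter's property, which you cite rather than prove, is genuinely the deepest ingredient here, so a fully self-contained proof would need to spell out that convexity argument --- but flagging it as a cited classical equivalence is legitimate. Second, your worry about the modular function is a non-issue: every manipulation in the proof uses only \emph{left} translations, which are isometries of $L^p(G)$ for left Haar measure on any locally compact group, unimodular or not, so no case distinction is needed.
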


Given a locally compact group $G$ and a compact subset $S\subseteq
G$, we define $\alpha (G, S)$ as the supremum of all $\varepsilon
\ge 0$ such that for any vector $v\in L^2(G)$ of norm $||v||=1$,
there exists an element $s\in S$ satisfying the inequality $$ ||
sv-v||\ge \varepsilon. $$ In case the group $G$ is discrete and
finitely generated, the existence of a finite generating set $S$
such that $\alpha (G, S)>0$, implies the inequality $\alpha (G,
S^\prime )>0$ for any other generating set $S^\prime $ of $G$.
Thus it is natural to consider the quantity $$\alpha (G)
=\inf\limits_{S} \alpha (G, S), $$ where $S$ ranges over all
finite generating sets of $G$. The following definition can be
found in \cite{Sh}

\begin{definition} The left regular representation of a finitely
generated
group $G$ is said to be {\it uniformly isolated from the trivial
representation} if $\alpha (G)>0$.
\end{definition}

Obviously one has
\begin{equation}
\alpha (G)=0 \label{1}
\end{equation}
for any finitely generated amenable group. Indeed, it is easy to
check that (\ref{wc}) implies $\| sv-v\| < \sqrt{2\varepsilon }$.
Thus (\ref{1}) follows from Theorem 1.2. On the other hand, it is
not clear whether the equality (\ref{1}) is equivalent to the
amenability of the group $G$. The following problem was suggested
by Shalom in \cite{Sh}.

\begin{ques}
Is the left regular representation of any non--amenable finitely
generated group uniformly isolated from the trivial
representation?
\end{ques}

In \cite{Sh}, the positive answer was obtained in the particular
case of residually finite hyperbolic groups. However, the question
remained open in general. The main purpose of the present note is
to show that the answer is negative and can be obtained by using
the methods developed in \cite{Osin}

The main part of this paper was written during the author's visit to
University of Geneva. I am grateful to Pierre de la Harpe for
invitation and constant attention to this work. Also I would like
to express my gratitude to Rostislav I. Grigorchuk, Anna G.
Erschler, Alexander Yu. Ol'shanskii, and the referee for useful
comments and remarks.

\section{Main results}

The main results of the paper are gathered in this section. We
call a finitely generated group $G$ {\it weakly amenable} if it
satisfies (\ref{1}) and denote by $WA$ the class of all weakly
amenable groups.

Two families of non--amenable weakly amenable groups are
constructed in the present paper. The idea of the first
construction is similar to one from  \cite{Gri-96}.

\begin{theorem} Let $A$ be a finitely generated abelian group. Suppose
that
there exist two monomorphisms $\lambda, \mu :A\to A$ with the
following properties.

1) $\lambda \circ \mu \equiv \mu \circ \lambda.$

2) The subgroup generated by $\lambda (A)\cup \mu (A)$ coincides
with $A$.

3) $\lambda (A)\cup \mu (A)\ne A$.

\noindent Then the HNN--extension
\begin{equation}
G=\langle A, t \; : \; t^{-1}\lambda (a)t=\mu (a), \; a\in
A\rangle \label{GnH}
\end{equation}
is a finitely generated weakly amenable non--amenable group.
\end{theorem}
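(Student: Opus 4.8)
The plan is to verify the three assertions separately, the substantial one being weak amenability, i.e.\ the equality $\alpha(G)=0$ from (\ref{1}). First note that since $\lambda$ is injective, condition (3) forces $A$ to be infinite (an injective endomorphism of a finite abelian group is onto, which would give $\lambda(A)=A$) and forces both $\lambda(A)$ and $\mu(A)$ to be proper in $A$, since their union omits some $a_0\in A$. Thus the associated subgroups of the HNN--extension (\ref{GnH}) are both proper, and by Britton's Lemma a ping--pong argument on the Bass--Serre tree produces a free subgroup of rank two; hence $G$ is non--amenable. Finite generation is immediate, as $G=\langle A,t\rangle$ with $A$ finitely generated abelian.

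The heart of the matter is to produce, for every $\delta>0$, a finite generating set $S$ and a unit vector $v\in L^2(G)$ with $\|sv-v\|<\delta$ for all $s\in S$; such $S$ then has $\alpha(G,S)\le\delta$, and since $\alpha(G)=\inf_S\alpha(G,S)$ this yields $\alpha(G)=0$. Everything hinges on one algebraic identity, available precisely because $\lambda$ and $\mu$ commute: for all $i\ge 0$ and $b\in A$ one has $t^{-i}\lambda^i(b)t^i=\mu^i(b)$, proved by induction on $i$, using $\mu\circ\lambda^{i-1}=\lambda^{i-1}\circ\mu$ at each Britton reduction (each intermediate element lies in $\lambda(A)$, so the defining relation applies). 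Consequently, if $a=\lambda^{L}(c)\in\lambda^L(A)$ then for every $0\le i\le L$ the conjugate $t^{-i}at^i=\lambda^{L-i}\mu^{i}(c)$ lies back inside $A$.

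Now fix $L$, let $h\colon G\to\mathbb Z$ be the height homomorphism with $h(t)=1$ and $h(A)=0$, so the levels $t^iA$ are pairwise disjoint, and choose a finite generating set $T_L$ of the finitely generated group $\lambda^L(A)$, putting $S_L=\{t^{\pm1}\}\cup T_L\cup T_L^{-1}$. Take $F_L=\bigsqcup_{i=0}^{L}t^iP_L$ and $v_L=|F_L|^{-1/2}\mathbf 1_{F_L}$, where $P_L\subseteq A$ is a Følner set chosen below; since left translation is unitary, $\|sv_L-v_L\|^2=|sF_L\triangle F_L|/|F_L|$. For $s=t^{\pm1}$ only the two extreme levels survive, so $\|tv_L-v_L\|^2=2/(L+1)$. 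For $s=a\in T_L$ the identity above gives $a\,t^iP_L=t^i(t^{-i}at^i)P_L$ with $t^{-i}at^i\in A$, whence $aF_L\triangle F_L=\bigsqcup_{i=0}^{L}t^i\big((t^{-i}at^i)P_L\triangle P_L\big)$ and $\|av_L-v_L\|^2=\frac{1}{L+1}\sum_{i=0}^{L}|(t^{-i}at^i)P_L\triangle P_L|/|P_L|$. As the finitely many elements $t^{-i}at^i$ (for $0\le i\le L$, $a\in T_L$) all lie in the amenable group $A$, one may choose the Følner set $P_L$ so invariant that each ratio is below $\delta^2$; taking $L$ with $2/(L+1)<\delta^2$ then gives $\|sv_L-v_L\|<\delta$ for every $s\in S_L$. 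It remains to see that $S_L$ generates $G$, i.e.\ $\langle\lambda^L(A),t\rangle=G$. This subgroup is invariant under conjugation by $t$, and by the identity $t^{-j}\lambda^L(A)t^j=\lambda^{L-j}\mu^j(A)$ it contains all $\lambda^{L-j}\mu^j(A)$, $0\le j\le L$. Applying the endomorphism $\lambda^a\mu^b$ to condition (2) yields $\langle\lambda^{a+1}\mu^b(A),\lambda^a\mu^{b+1}(A)\rangle=\lambda^a\mu^b(A)$; an induction on $a+b$ then shows these subgroups jointly generate $A$, so $\langle\lambda^L(A),t\rangle\supseteq A$ and equals $G$.

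The main obstacle is conceptual rather than computational. Because $G$ is non--amenable it admits no Følner sets for any fixed generating set, so almost--invariant vectors can only be manufactured by letting the generating set vary with $\delta$. The decisive observation is that the deep subgroups $\lambda^L(A)$ together with $t$ still generate $G$ (this is exactly what condition (2) secures), while, by the commutation (1), every element of $\lambda^L(A)$ conjugates back into the abelian --- hence amenable --- group $A$ throughout the first $L$ levels. Matching the depth of the slab $F_L$ to the depth $L$ of the subgroup is precisely what allows the Følner property of $A$ to propagate into genuine almost--invariance of $v_L$ under all of $S_L$; condition (3) is used only to guarantee non--amenability.
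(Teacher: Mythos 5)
Your proof is correct, but it takes a genuinely different route from the paper's. The paper packages the same structural facts into a surjective endomorphism $\phi:G\to G$ defined by $\phi(t)=t$, $\phi(a)=\lambda(a)$: surjectivity of $\phi^L$ (Lemma 3.7, iterated) is precisely your claim that $\lambda^L(A)$ and $t$ generate $G$, and your conjugation identity $t^{-i}\lambda^i(b)t^i=\mu^i(b)$ is the computation underlying Lemma 3.8. The paper then forms the increasing kernels $N_i=\ker\phi^i$, shows that the limit quotient $\overline{G}=G/\bigcup_i N_i$ is abelian--by--cyclic and hence amenable (Lemma 3.8, Corollary 3.9), observes that $G/N_i\cong G$ since $\phi^i$ is onto, and invokes Theorem 3.4 (the continuity of $\alpha$ at amenable points of the space of marked groups, proved in \cite{Osin}): since the marked groups $G/N_i$ converge to the amenable group $\overline{G}$, one gets $\alpha(G/N_i,X)\to 0$ and so $\alpha(G)=0$. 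In effect the paper works with the same generating sets as you do --- the images under $\lambda^L$ of a generating set of $A$, together with $t$ --- but the almost--invariant vectors are supplied abstractly by the black--box continuity theorem, whereas you construct them by hand as slabs $\bigsqcup_{i=0}^{L}t^iP_L$ built from F\o lner sets $P_L$ of the abelian group $A$ (morally, pullbacks of F\o lner sets of $\overline{G}$). Your version is more elementary and self--contained: it needs nothing beyond Britton's lemma and amenability of abelian groups, and it shows explicitly where each of the hypotheses (1), (2), (3) enters. The paper's version is softer and buys generality: the same continuity theorem, not any ad hoc vector construction, is what powers the second, harder construction of the paper (the Kazhdan group $Q$ of Theorem 2.3, obtained as a limit of hyperbolic groups), where no explicit F\o lner-type sets are available.
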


\begin{example} Suppose that $A=\mathbb Z$ and $\lambda , \mu $ are
defined
by $\lambda (1)=m$, $\mu (1)=n$. If $m,n$ are relatively prime,
and $|m|\ne 1, |n|\ne 1$, one can easily verify the conditions of
Theorem 2.1. Taking the HNN--extension, we obtain the
Baumslag--Solitar group $$BS(m,n)=\langle a,t\; :\;
t^{-1}a^mt=a^n\rangle .$$ Using the Britton lemma on
HNN--extensions \cite[Ch. IV, Sec. 2]{LS}, one can prove that the
elements $t$ and $a^{-1}ta$ generates a free subgroup of rank $2$.
This shows that the class $WA$ is not closed under the taking of
subgroups.
\end{example}

In the last section of the present paper we give another way to
construct a weakly amenable non--amenable group using limits of
hyperbolic groups. The proof involves the tools of hyperbolic
group theory developed in \cite{O1} and certain results from
\cite{Osin}.

Recall that a locally compact group $G$ is said to have {\it
property (T) of Kazhdan} if the one--dimensional trivial
representation is an isolated point of the set of all irreducible
unitary representations of $G$ endowed with the Fell topology (we
refer to \cite{Kaz}, \cite{Lub} and \cite{HV} for more details).
It follows easily from the definition and Hulaniski's theorem that
every discrete finitely generated amenable group having property
(T) is finite. In contrast, we obtain the following unexpected
result in the case of weakly amenable groups.

\begin{theorem} There exists a $2$--generated infinite periodic weakly
amenable group $Q$ having property (T) of Kazhdan. In particular,
$Q$ is non--amenable.
\end{theorem}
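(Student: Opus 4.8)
The plan is to realize $Q$ as a direct limit of a sequence of non-elementary hyperbolic groups
$$G_0 \twoheadrightarrow G_1 \twoheadrightarrow G_2 \twoheadrightarrow \cdots, \qquad Q=\varinjlim G_i,$$
so that each of the three required properties is controlled by a separate feature of the sequence. For property (T) I would start from a torsion-free hyperbolic group $G_0$ carrying property (T) --- for instance a torsion-free cocompact lattice in $\mathrm{Sp}(n,1)$ with $n\ge 2$, which is word-hyperbolic and has (T) --- and arrange that the final quotient is $2$-generated. Since (T) is inherited by quotients and $Q$ is a quotient of $G_0$, the group $Q$ will automatically have property (T); combined with infiniteness this forces non-amenability, by the remark preceding the statement that an infinite finitely generated amenable group cannot have (T).

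To make $Q$ periodic I would impose, step by step, relations $w_i^{n_i}=1$, letting $w_i$ run over a list exhausting the elements of the limit and choosing the exponents $n_i$ large. Using Ol'shanskii's machinery for residualing homomorphisms of hyperbolic groups \cite{O1} together with the isoperimetric estimates of \cite{Osin}, each intermediate $G_i$ can be kept non-elementary hyperbolic with controlled constants, the maps $G_i\to G_{i+1}$ can be made injective on balls of radius $R_i$ with $R_i\to\infty$, and the limit $Q$ is infinite and periodic. This is the ``limits of hyperbolic groups'' mechanism announced in the introduction, and its hyperbolic-group input is exactly what \cite{O1} and \cite{Osin} provide.

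The substantive point is weak amenability, i.e. $\alpha(Q)=0$. Here it is essential to recall that for a \emph{fixed} finite generating set $S$ one has $\alpha(Q,S)>0$ for every non-amenable $Q$ (by Theorem 1.2), so the vanishing of $\alpha(Q)=\inf_S\alpha(Q,S)$ must come from \emph{varying} $S$: I must produce, for each $\varepsilon>0$, a finite generating set $S_\varepsilon$ of $Q$ and a unit vector $v_\varepsilon\in L^2(Q)$ with $\|sv_\varepsilon-v_\varepsilon\|<\varepsilon$ for all $s\in S_\varepsilon$, so that the gaps $\alpha(Q,S_\varepsilon)$ tend to $0$ while each stays positive. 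I would take normalized indicators $v=\chi_F/\sqrt{|F|}$, for which $\|sv-v\|^2=|sF\triangle F|/|F|$, reducing the task to exhibiting markings $S_\varepsilon$ that admit $(S_\varepsilon,\varepsilon^2)$-F{\o}lner sets. The idea is to exploit the torsion: choose the generators in $S_\varepsilon$ to be elements of very large finite order and arrange, via the injectivity radius $R_i$ and the freedom to prescribe local relations in the quotient, that the ball of radius $R_i$ in $\mathrm{Cay}(Q,S_\varepsilon)$ agrees with a ball in the Cayley graph of a finite (hence amenable) group. A F{\o}lner set of that finite group sitting inside the injectivity ball then descends, by injectivity, to a genuine finite set $F\subseteq Q$ with $|sF\triangle F|/|F|<\varepsilon^2$, yielding the desired vector. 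Thus $Q$ is non-amenable yet fails uniform non-amenability so strongly that $\alpha(Q)=0$.

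The main obstacle is the tension between the last two ingredients. Property (T) is inherited precisely because every stage $G_i$ is non-elementary hyperbolic and therefore uniformly non-amenable, and this pushes directly against the existence of near-F{\o}lner sets; consequently the small gaps $\alpha(Q,S_\varepsilon)$ cannot originate at any single stage but only from the interplay between the growing injectivity radius and the growing orders of torsion elements. Matching these two growth rates --- keeping each $G_i$ hyperbolic with usable constants while simultaneously realizing, up to radius $R_i$, the amenable local model needed for the F{\o}lner sets, all inside a $2$-generated quotient --- is the delicate part, and it is exactly where the quantitative control of \cite{O1} and \cite{Osin} is indispensable.
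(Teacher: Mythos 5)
Your handling of property (T), periodicity, infiniteness, and non-amenability follows essentially the paper's lines and is fine. The genuine gap is in the one substantive point you yourself flag: the proof that $\alpha(Q)=0$. You propose to produce, for each $\varepsilon>0$, a generating set $S_\varepsilon$ of $Q$ whose ball of radius $R_i$ agrees with a ball in the Cayley graph of a finite group, and then to transfer a F{\o}lner set of that finite group into $Q$. Nothing in the machinery you cite can arrange this. Ol'shanskii's technique from \cite{O1} lets you impose relations $w^N=1$ while keeping quotients non-elementary hyperbolic; the resulting groups look locally like the hyperbolic groups you started from, not like amenable groups, and you are given no tool for forcing balls of $\mathrm{Cay}(Q,S_\varepsilon)$, with respect to cleverly chosen new generating sets, to match balls of finite groups. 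There is also a trap in the statement itself: if the ball of radius $R$ of $\mathrm{Cay}(Q,S_\varepsilon)$ is label-isomorphic to the ball of radius $R$ of a finite group $P$ and $R$ exceeds the diameter of $P$, then every vertex of that ball has all of its incident edges inside the ball, so the ball is closed under right multiplication by generators and $Q\cong P$ is finite. Hence your finite local models must have diameter far larger than $R_i$, and the entire burden falls on producing proper F{\o}lner subsets of small diameter --- again with no mechanism supplied. Crucially, since your $Q$ is a quotient of a single chain starting from one lattice in $\mathrm{Sp}(n,1)$, this cannot be outsourced to existing results: whether quotients of a \emph{fixed} property (T) hyperbolic group can be made to converge to amenable groups in the space of marked groups is precisely the kind of statement the cited sources do not provide.

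The paper's key idea, which your proposal misses, is that one never builds F{\o}lner-type structure inside $Q$ at all. The group $Q$ is constructed (Theorem 4.4) as a common quotient of \emph{all} non-elementary hyperbolic groups, by interleaving Ol'shanskii's common-quotient theorem (Lemma 4.1, Corollary 4.2) with the torsion-imposing step (Lemma 4.3) along an enumeration of all non-elementary hyperbolic groups. Weak amenability then follows from two external facts: first, there exists a sequence of non-elementary hyperbolic marked groups $F_2/N_i$ converging in the space of marked groups to an amenable group, whence $\alpha(F_2/N_i,X)\to 0$ (Theorems 3.4 and 4.5, both taken from \cite{Osin}); second, $\alpha$ is monotone under surjections, i.e.\ $\alpha(G)\ge\alpha(P)$ whenever $G$ maps onto $P$ (Lemma 4.6). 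Since every one of these nearly-amenable hyperbolic groups surjects onto $Q$ by the universal property, $\alpha(Q)\le\alpha(F_2/N_i)\to 0$. This shows why universality --- not merely being a limit of hyperbolic groups with (T) and torsion --- is indispensable: property (T) is harvested from one member of the family (a uniform lattice in $\mathrm{Sp}(n,1)$), weak amenability is harvested from entirely different members, and $Q$ must be a quotient of both kinds simultaneously. Your construction keeps only the first kind, and so the second ingredient is unavailable to it.
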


We also consider a variant of Day's  question which goes back to
the papers \cite{Day}, \cite{Neu} and known as the so called "von
Neumann problem". Let $NF$ denote the class of all groups
containing no non--abelian free subgroups, and $AG$ denote the
class of all amenable groups. Obviously $AG\subseteq NF$ since any
non--abelian free group is non--amenable and the class $AG$ is
closed under the taking of subgroups \cite{Neu}. The question is
whether $NF=AG$.

Ol'shanskii \cite{Ols} shown that certain groups constructed by
him earlier (torsion groups with unbounded orders of elements in
which all proper subgroups are cyclic) are non--amenable and thus
the answer is negative. Further, in \cite{A} Adian proved that the
free Burnside groups $B(m,n)$ of sufficiently large odd exponent
$n$ and rank $m>1$ are non--amenable. It is a natural stronger
version of Day's question, whether the inclusion $$WA\cap
NF\subset AG$$ is true. We note that all groups constructed in
Theorem 2.1 contain non--abelian free subgroups (see Lemma 3.10
below). Furthermore, $B(m,n)\notin WA$ for any $m>1$ and any $n$
odd and large enough, as follows from the main result of
\cite{Osin2}. Thus these groups do not provide an answer. On the
other hand the negative answer is an immediate consequence of
Theorem 2.3.

\begin{corollary} There exists a finitely generated weakly amenable
non--amenable group which contains no non--abelian free
subgroups.
\end{corollary}

In conclusion we note that our construction of the group $Q$ from
Theorem 2.3 is closely related to the question whether any
finitely generated group of exponential growth is of uniform
exponential growth (see Section 4 for definitions). Originally,
this problem was formulated in \cite{GLP} and studied intensively
during the last few years (we refer to \cite{Har-book} for
survey). In \cite{Koubi}, Koubi proved that the exponential growth
rate $\omega (G)$ of every non--elementary hyperbolic group $G$
satisfies the inequality $\omega (G)>1$. On the other hand, the
following question is still open.

\begin{ques} Is the set $$\Omega _{\mathcal H}=\{ \omega (G)\; : \; G
{\rm \; is \; non-elementary\; hyperbolic }\} $$ bounded away from
the identity?
\end{ques}

In Section 4, we observe that the negative answer would imply the
existence of a finitely generated group having non--uniform
exponential growth.

\section{Non--Hopfian weakly amenable groups}

Let $F_m$ be the free group of rank $m$, $X=\{ x_1, x_2, \ldots ,
x_m\} $ a free generating set of $F_m$. We begin this section by
describing the Grigorchuk's construction of a topology on
$\mathcal G_m$, the set of all normal subgroups of $F_m$ (or,
equivalently, on the set of all group presentations with the same
generating set).

\begin{definition} The {\it Cayley graph} $\Gamma = \Gamma (G,S)$ of a
group
$G$ generated by a set $S$ is an oriented labeled 1--complex with
the vertex set $V(\Gamma )=G$ and the edge set $E(\Gamma )=G\times
S$. An edge $e=(g,s)\in E(\Gamma )$ goes from the vertex $g$ to
the vertex $gs$ and has the label $\phi (e)=s$. As usual, we
denote the origin and the terminus of the edge $e$, i.e., the
vertices $g$ and $gs$, by $\alpha (e)$ and $\omega (e)$
respectively. One can endow the group $G$ (and, therefore, the
vertex set of $\Gamma $) with a {\it length function} by assuming
$\|g\|_S$, the length of an element $g\in G$, to be equal to the
length of a shortest word in the alphabet $S\cup S^{-1}$
representing $g$.
\end{definition}

Let $N\in \mathcal G_m$. To simplify our notation we will identify
the set $X$ with the generating set of the quotient group $F_m/N$
naturally obtained from $X$. Now let $N _1, N_2$ be two normal
subgroups of $F_m$ and $G_1=F_m/N_1$, $G_2=F_m/N_2$. By $B_i(r)$,
$i=1,2$, we denote the ball of radius $r$ around the identity in
the Cayley graph $\Gamma _i=\Gamma (G_i, X)$, i.e., the oriented
labeled subgraph with the vertex set $$ V(B_i(r))=\{ g\in G_i\;
:\; \|g\|_{X_i}\le r\} $$ and the edge set $$ E(B_i(r))=\{ e\in
E(\Gamma _i)\; :\; \alpha (e)\in V(B_i(r))\; {\rm and} \; \omega
(e)\in V(B_i(r))\} .$$ One says that the groups $G_1$ and $G_2$
are {\it locally $r$--isomorphic } (being considered quotients of
$F_m$)  and writes $G_1\sim _rG_2$ if there exists a graph
isomorphism $$\iota :B_1(r)\to B_2(r)$$ that preserves labels and
orientation.

\begin{definition} For every $N\in \mathcal G_m$ and $r\in \mathbb N$,
we
consider the set $$W_r(N)=\{ L\in \mathcal G_m\; :\; F_m/N\sim _r
F_m/L\} .$$ One defines the topology on $\mathcal G_m$ by taking
the collection of the sets $W_r(N)$ as the base of neighborhoods.
\end{definition}

\begin{example} Suppose that $\{ N_i\} $ is a sequence of
normal subgroups of $F_m$ such that $N_1\ge N_2\ge \ldots $. Then
the limit of the sequence coincides with $\bigcap
\limits_{i=1}^{\infty } N_i$. Symmetrically if $N_1\le N_2\le
\ldots $, then the limit is the union $\bigcup
\limits_{i=1}^{\infty } N_i$. The proof is straightforward and is
left as an exercise to the reader.
\end{example}

We need the following result, which is proved in \cite{Osin} (up
to notation).

\begin{theorem} Suppose that $\{ N_i\} _{i \in \mathbb
N}$ is a sequence of elements of $\mathcal G_m$ which converges to
an element $N\in \mathcal G_m$. If the group $G=F_m/N$ is
amenable, then \begin{equation} \lim\limits _{i\to \infty} \alpha
(F_m/N_i, X) =0.\label{k}
\end{equation}
\end{theorem}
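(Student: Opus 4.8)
The plan is to exploit Hulaniski's theorem together with the F\o lner characterization of amenability to manufacture, for each $\varepsilon>0$, a finitely supported almost-invariant unit vector in $L^2(F_m/N)$, and then to transport this vector across the local isomorphisms furnished by the convergence $N_i\to N$. First I would fix $\varepsilon>0$. Since $G=F_m/N$ is amenable and finitely generated, the F\o lner criterion provides a finite subset $F\subseteq G$ with
$$\frac{|x_j F \,\triangle\, F|}{|F|} < \varepsilon^2 \qquad (j=1,\dots,m).$$
Equivalently, the normalized indicator $v=|F|^{-1/2}\mathbf 1_F$ is a unit vector in $L^2(G)$ with $\|x_j v-v\|<\varepsilon$ for every generator. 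Let $r=\max_{g\in F}\|g\|_X$, so that $F\subseteq B_N(r)$ and every element of $F\cup x_jF$ is represented by a word of length at most $r+1$.

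The heart of the argument is the transport step, and this is where I expect the main difficulty to lie, because the Cayley graph encodes right multiplication whereas the F\o lner condition involves the left translates $x_jF$. To get around this I would not push $F$ forward by the graph isomorphism directly; instead I would lift. Choose for each $g\in F$ a word $w(g)$ of length $\le r$ representing it, and for each $i$ define $\phi_i\colon F\to G_i:=F_m/N_i$ by sending $g$ to the image of $w(g)$. The crucial, purely combinatorial, observation is that every statement I need about $F$ --- namely that $w(g)$ and $w(g')$ represent distinct elements, and that $w(g')=x_j w(g)$ --- is the (non)triviality in the group of a specific word of length at most $2r+1$. By the convergence $N_i\to N$, for all sufficiently large $i$ we have $G_i\sim_R G$ with $R=2r+2$, and local $R$-isomorphism means exactly that a word of length $\le R$ represents $1$ in $F_m/N_i$ if and only if it does in $F_m/N$ (trace the labeled path from the identity; it stays inside $B(R)$, and $\iota$ preserves whether it closes up). Consequently $\phi_i$ is injective, and, writing $F_i=\phi_i(F)\subseteq G_i$, the counts $|F_i|=|F|$ and $|x_jF_i\cap F_i|=|x_jF\cap F|$ are preserved. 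Since $|x_jF_i|=|F_i|$ automatically, it follows that
$$\frac{|x_jF_i\,\triangle\,F_i|}{|F_i|} = \frac{|x_jF\,\triangle\,F|}{|F|} < \varepsilon^2.$$

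Finally I would read off the conclusion. For large $i$ the unit vector $v_i=|F_i|^{-1/2}\mathbf 1_{F_i}\in L^2(G_i)$ satisfies $\|x_j v_i-v_i\|<\varepsilon$ for every generator $x_j\in X$, so no number exceeding $\varepsilon$ can serve as a lower bound in the definition of $\alpha(G_i,X)$; hence $\alpha(F_m/N_i,X)\le\varepsilon$ for all sufficiently large $i$. As $\varepsilon>0$ was arbitrary, this yields $\lim_{i\to\infty}\alpha(F_m/N_i,X)=0$, as required. The one point demanding care throughout is the choice of radius $R$ large enough (here $2r+2$) that all the finitely many relations witnessing the F\o lner quality of $F$ fall within the range where local isomorphism applies; the left/right discrepancy dissolves once everything is phrased in terms of triviality of bounded-length words.
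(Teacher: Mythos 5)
Your proposal is correct, but there is no in-text argument to compare it against: the paper explicitly defers the proof of this theorem to \cite{Osin} (``which is proved in \cite{Osin} (up to notation)''), so a proof here has to be reconstructed from scratch, as you did. Your reconstruction follows the natural (and surely the intended) route: amenability of $F_m/N$ gives, via the F\o lner criterion, finitely supported almost-invariant unit vectors --- you take normalized indicators of F\o lner sets --- and such vectors, together with the quantities $\| x_j v - v\| $, are determined by a bounded ball of the labeled Cayley graph, hence can be transported to $F_m/N_i$ once $i$ is large enough that balls of the relevant radius are isomorphic. The two points where a blind attempt could stumble are both handled correctly. First, the left-multiplication conditions $x_j g = g'$ are not edges of the Cayley graph (edges encode right multiplication), and your reduction of every needed fact about $F$ to the (non)triviality of explicit words of length at most $2r+1$, which is preserved under local $(2r+2)$-isomorphism, dissolves this left/right discrepancy exactly as claimed. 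Second, your path-tracing justification that a local $R$-isomorphism preserves triviality of words of length at most $R$ is sound; in fact it does not even require the isomorphism $\iota$ to fix the identity vertex, since a path labeled by a word $w$ issuing from \emph{any} vertex closes up if and only if $w$ represents the identity. Your final step, passing from the vectors $v_i$ with $\max_j \| x_j v_i - v_i\| < \varepsilon$ to the bound $\alpha (F_m/N_i, X)\le \varepsilon$, matches the definition of $\alpha $ as a supremum, so the limit statement (\ref{k}) follows.
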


\begin{remark} Let $\mathcal {AG}_m$ denote the subset of all
elements $N\in \mathcal G_m$ such that the quotient group $F_m/N$
is amenable. Essentially the theorem says that the map $\alpha :
\mathcal G_m \to [0, +\infty )$ which takes each $N\in \mathcal
G_m$ to $\alpha (F_m/N, X)$ is continuous at any point $N\in
\mathcal {AG} _m$. It is not hard to see that $\alpha $ is not
continuous at arbitrary point of $\mathcal G_m$. Indeed, consider
the sequence of subgroups $N_1\ge N_2\ge \ldots $ of finite index
in $F_m$ such that
\begin{equation}
\bigcap\limits_{i=1}^\infty N_i=\{ 1\} \label{rf}
\end{equation}
(such a sequence exists since any free group is residually
finite). One can easily check that (\ref{rf}) implies
$$\lim\limits_{i\to \infty} N_i=\{1 \} $$ (see Example 3.3). Since
the group $F_m$ is non--amenable whenever $m>1$, we have $\alpha
(\{ 1\} )>0$. However, $\alpha (F_m/N_i, X)=0$ for any $i$, as the
quotient groups $F_m/N_i$ are finite (and, therefore, amenable).
\end{remark}

Now suppose that $G$ is the group defined by (\ref{GnH}). The
following four lemmas are proved under the assumptions of Theorem
2.1. Consider the homomorphism $\phi :G\to G$ induced by $\phi
(t)=t$ and $\phi (a)=\lambda (a)$ for every $a\in A$.

\begin{lemma}
The homomorphism $\phi $ is well--defined.
\end{lemma}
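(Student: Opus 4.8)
The plan is to invoke the universal property of group presentations (von Dyck's theorem): a map prescribed on a generating set of $G$ extends to a homomorphism $G\to G$ precisely when the images of the generators satisfy all the defining relations of $G$. The group $G$ in (\ref{GnH}) is generated by $A$ together with $t$, and its defining relations are the internal group relations of the abelian group $A$ and the HNN relations $t^{-1}\lambda(a)t=\mu(a)$ for $a\in A$. So it suffices to check that the prescribed images respect both families of relations.

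First I would dispose of the relations internal to $A$. Since $\phi|_A=\lambda$ and $\lambda$ is by hypothesis a monomorphism, in particular a homomorphism, of $A$ into $A$, the restriction of $\phi$ to $A$ automatically preserves the multiplication of $A$; no computation is needed here.

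The substantive step is to verify that $\phi$ carries each HNN relation to a valid relation in $G$. Applying $\phi$ to $t^{-1}\lambda(a)t\,\mu(a)^{-1}$ and using $\phi(t)=t$ together with $\phi|_A=\lambda$, I obtain $\phi\bigl(t^{-1}\lambda(a)t\bigr)=t^{-1}\lambda(\lambda(a))t$ and $\phi(\mu(a))=\lambda(\mu(a))$. Now the defining relation of $G$, applied to the element $\lambda(a)\in A$ rather than to $a$, gives $t^{-1}\lambda(\lambda(a))t=\mu(\lambda(a))$, so the image relation reduces to the identity $\mu(\lambda(a))=\lambda(\mu(a))$, that is $(\mu\circ\lambda)(a)=(\lambda\circ\mu)(a)$. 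This is exactly hypothesis 1) of Theorem 2.1, so the relation holds in $G$ for every $a\in A$, and $\phi$ is well--defined.

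I do not expect a genuine obstacle: the only point requiring care is recognizing that the defining relation must be instantiated at $\lambda(a)$, after which the commutativity assumption $\lambda\circ\mu\equiv\mu\circ\lambda$ closes the argument at once. Conditions 2) and 3) of Theorem 2.1 play no role in this lemma; they will be needed later to secure finite generation, non--amenability, and the non--triviality of $\phi$.
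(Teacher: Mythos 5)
Your proof is correct and follows essentially the same route as the paper: verify the relations of $A$ via the fact that $\phi|_A=\lambda$ is a homomorphism, then check each HNN relation by applying the defining relation of $G$ at the element $\lambda(a)$ and invoking the commutativity hypothesis $\lambda\circ\mu\equiv\mu\circ\lambda$. The paper's computation $\phi\bigl((\lambda(a))^t(\mu(a))^{-1}\bigr)=(\lambda\circ\lambda(a))^t(\lambda\circ\mu(a))^{-1}=\mu\circ\lambda(a)\,(\mu\circ\lambda(a))^{-1}=1$ is exactly your argument written in one line.
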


\begin{proof} We have to check that for any relation $R=1$ of the
group
$G$ one has $\phi (R)=1$ in $G$. There are two possibilities for
$R$.

1) First suppose that $R=1$ is a relation of the group $A$. Since
the restriction of $\phi $ to $A$ coincides with the monomorphism
$\lambda $, we have $\phi (R)=\lambda (R)=1$.

2) Assume that $R$ has the form $(\lambda (a))^t(\mu (a))^{-1}$.
Taking into account the first condition of Theorem 2.1, we obtain
$$ \phi \big( (\lambda (a))^t(\mu (a))^{-1}\big) = (\lambda \circ
\lambda (a))^t (\lambda\circ \mu (a))^{-1}= \mu \circ \lambda (a)
(\mu\circ \lambda (a))^{-1}=1.$$
\end{proof}

\begin{lemma} The map $\phi $ is surjective.
\end{lemma}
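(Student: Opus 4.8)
The plan is to show that the image of $\phi$, which is the subgroup $\langle \phi(A),\phi(t)\rangle = \langle \lambda(A),t\rangle$ of $G$, is all of $G$. Since $G$ is generated by $A$ and $t$ by (\ref{GnH}), and since $t=\phi(t)$ already lies in the image, it suffices to prove that every element of $A$ lies in $\langle \lambda(A),t\rangle$.

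First I would note that $\lambda(A)$ is contained in $\mathrm{im}(\phi)$ by the definition of $\phi$. Next, using the defining relations $t^{-1}\lambda(a)t=\mu(a)$, I obtain $\mu(a)=t^{-1}\lambda(a)t\in\langle\lambda(A),t\rangle$ for every $a\in A$, so that $\mu(A)$ is contained in the image as well. Hence $\mathrm{im}(\phi)$ contains the subgroup generated by $\lambda(A)\cup\mu(A)$.

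Finally, I would invoke condition 2) of Theorem 2.1, which states precisely that the subgroup generated by $\lambda(A)\cup\mu(A)$ equals $A$. Combining the previous steps yields $A\subseteq\langle\lambda(A),t\rangle=\mathrm{im}(\phi)$, and together with $t\in\mathrm{im}(\phi)$ this gives $G=\langle A,t\rangle\subseteq\mathrm{im}(\phi)$, so $\phi$ is surjective.

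The argument is essentially a one-line consequence of the HNN relation together with condition 2), so there is no serious obstacle. The only point requiring minor care is to use the relation in the correct direction, expressing $\mu(a)$ as the conjugate $t^{-1}\lambda(a)t$, rather than attempting to construct preimages of $A$ directly; the latter would be awkward since $\lambda$ and $\mu$ are only assumed to be monomorphisms, and condition 3) in fact says they are not jointly surjective. I note that conditions 1) and 3) play no role in surjectivity; they will presumably be used elsewhere (in particular, condition 3) should force $\phi$ to fail to be injective, making $G$ non-Hopfian).
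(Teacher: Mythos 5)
Your proof is correct and follows essentially the same route as the paper: observe that $t$ and $\lambda(A)$ lie in $\phi(G)$, use the HNN relation to get $\mu(A)=t^{-1}\lambda(A)t\subseteq\phi(G)$, and conclude via condition 2) of Theorem 2.1 that $A\le\phi(G)$, hence $\phi(G)=G$. Your closing remarks about the roles of conditions 1) and 3) are also accurate (they are used for well-definedness of $\phi$ and for non-injectivity/the free subgroup, respectively).
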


\begin{proof} Observe that $G$ is generated by $t$ and $A$. As $t\in
\phi
(G)$, it suffices to prove that $A\le \phi (G)$. Clearly we have
$\lambda (A)=\phi (A)\in \phi (G)$ and $\mu (A)=(\lambda (A))^t\in
\phi (G)$. It remains to refer to the second condition of Theorem
2.1.
\end{proof}

Let us denote by $\phi ^i$ the $i$--th power of $\phi $ and by
$N_i$ its kernel. Put $N=\bigcup\limits_{i=1}^\infty N_i$.
Obviously the group $\overline{G}=G/N$ is generated by the images
of $a$ and $t$ under the natural homomorphism $G\to \overline{G}$.
To simplify our notation we will denote these images by $a$ and
$t$ as well.

\begin{lemma} The group $\overline{G} $ is an extension of an abelian
group by a cyclic one.
\end{lemma}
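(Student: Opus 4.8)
The plan is to exhibit inside $\overline{G}$ a normal abelian subgroup $M$ whose quotient is cyclic. Since $\overline{G}$ is generated by the images of $A$ and of $t$, once $M$ contains the image $\overline{A}$ of $A$ the quotient $\overline{G}/M$ will be generated by the image of $t$, hence cyclic; so the whole content is to produce such an $M$ that is at the same time abelian and normal. I will flag at once that the naive choices fail: the image $\overline{A}$ is in general not normal in $\overline{G}$, and its most natural abelian enlargement is still not $t$-invariant.

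The engine of the argument is that $\phi$ descends to $\overline{G}$ as an \emph{automorphism}. Indeed $\phi(N_i)\subseteq N_{i-1}\subseteq N$, so $\phi(N)\subseteq N$ and $\phi$ induces an endomorphism $\overline{\phi}\colon\overline{G}\to\overline{G}$; it is surjective by Lemma 3.7, and it is injective because $\overline{\phi}(gN)=N$ forces $\phi(g)\in N_j$ for some $j$, whence $\phi^{j+1}(g)=1$ and $g\in N_{j+1}\subseteq N$. Since $\phi(t)=t$, the automorphism $\overline{\phi}$ fixes the image of $t$ and therefore commutes with conjugation by $t$; on $\overline{A}$ it is the map induced by $\lambda$, so $\overline{\phi}(\overline{A})=\overline{\lambda(A)}\subseteq\overline{A}$.

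I then build $M$ in two stages. First, because $\overline{\phi}(\overline{A})\subseteq\overline{A}$, the subgroups $\overline{A}\subseteq\overline{\phi}^{-1}(\overline{A})\subseteq\overline{\phi}^{-2}(\overline{A})\subseteq\cdots$ form an increasing chain of abelian groups, so their union $\widehat{A}=\bigcup_{i\ge0}\overline{\phi}^{-i}(\overline{A})$ is abelian and, by the chain, $\overline{\phi}$-invariant. Using the defining relation $t^{-1}\lambda(a)t=\mu(a)$ I check that $\overline{\phi}(t^{-1}at)=t^{-1}\lambda(a)t=\mu(a)\in\overline{A}$, so $t^{-1}at\in\overline{\phi}^{-1}(\overline{A})\subseteq\widehat{A}$; since $\overline{\phi}$ fixes $t$ and $\widehat{A}$ is $\overline{\phi}$-invariant, this upgrades to $t^{-1}\widehat{A}t\subseteq\widehat{A}$. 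Second, this last inclusion means the conjugates $D_k=t^{k}\widehat{A}t^{-k}$ satisfy $D_0\subseteq D_1\subseteq\cdots$, so $M=\bigcup_{k\ge0}D_k$ is again an increasing union of abelian groups, hence abelian; it contains $\overline{A}$, and the inclusions $tMt^{-1}\subseteq M$ and $t^{-1}Mt\subseteq M$ (the latter coming from $t^{-1}\widehat{A}t\subseteq\widehat{A}$) show that $M$ is normal. Then $\overline{G}/M$ is cyclic and $\overline{G}$ is abelian-by-cyclic, as required.

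The main obstacle is exactly the simultaneous demand of abelianness and normality, and it is what forces the two saturations above. Conjugation by $t$ genuinely moves $\overline{A}$ off itself --- in the Baumslag--Solitar case $\overline{G}\cong\mathbb{Z}[1/mn]\rtimes\mathbb{Z}$ and the image of $\langle a\rangle$ is not normal --- and even the $\overline{\phi}$-saturation $\widehat{A}$, while abelian, is not $t$-invariant. The decisive structural input is that the HNN relation trades $\lambda$ for $\mu$ under conjugation by $t$, which is precisely what makes the chain $D_0\subseteq D_1\subseteq\cdots$ grow in the right direction; saturating first under $\overline{\phi}^{-1}$ and then under conjugation by $t$ is what finally yields a subgroup that is both abelian and normal.
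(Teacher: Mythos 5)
Your proof is correct, but it follows a genuinely different route from the paper's. The paper takes $B$ to be the kernel of the natural homomorphism $\overline{G}\to\langle t\rangle$, observes that $B$ is generated by the conjugates $a^{t^i}$, and checks abelianness by a single commutator computation: $\phi^i([a^{t^i},a])=[(\lambda^i(a))^{t^i},\lambda^i(a)]=[\mu^i(a),\lambda^i(a)]=1$, where condition 1 of Theorem 2.1 is what allows one to slide the $t$'s past the $\lambda$'s and get $(\lambda^i(a))^{t^i}=\mu^i(a)$; hence $[a^{t^i},a]\in N_i\subseteq N$ and the generators commute in $\overline{G}$. You never compute a commutator. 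Instead you prove a structural fact nowhere stated in the paper --- that $\phi$ descends to an \emph{automorphism} $\overline{\phi}$ of $\overline{G}$ --- and extract abelianness from the observation that preimages of the abelian subgroup $\overline{A}$ under an automorphism are abelian, assembling the normal subgroup as an increasing union $M=\bigcup_{k\ge 0}t^k\bigl(\bigcup_{i\ge 0}\overline{\phi}^{-i}(\overline{A})\bigr)t^{-k}$. The two constructions in fact produce the same subgroup: your $M$ is normal and contains $\overline{A}$, hence contains the normal closure of $\overline{A}$, which equals $B$; conversely every element of $\overline{\phi}^{-i}(\overline{A})$ and each of its $t$-conjugates has trivial $t$-exponent (since $\overline{\phi}$ preserves the exponent map), so $M\subseteq B$. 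The correspondence between the arguments is tight: the injectivity of $\overline{\phi}$, which you prove, is exactly what replaces the paper's step ``the commutator lies in $N_i$, hence dies in $\overline{G}$,'' while your inclusion $t^{-1}\widehat{A}t\subseteq\widehat{A}$ unpacks to the paper's computation with the HNN relation. The paper's argument is shorter and more elementary; yours costs more setup but yields a by-product of independent interest, namely that $\overline{G}$ is precisely the quotient of $G$ on which the non-injective endomorphism $\phi$ becomes an automorphism. Two expository points you should tighten: when concluding that $M$ is normal, say explicitly that invariance under conjugation by $\overline{A}$ is automatic because $\overline{A}\subseteq M$ and $M$ is abelian (invariance under $t^{\pm 1}$ alone does not give normality, since $\overline{G}$ is generated by $\overline{A}$ together with $t$); and the ``upgrade'' to $t^{-1}\widehat{A}t\subseteq\widehat{A}$ deserves one more line, e.g., apply $\overline{\phi}^{-i}$ to the verified inclusion $t^{-1}\overline{A}t\subseteq\overline{\phi}^{-1}(\overline{A})$ and use that conjugation by $t$ commutes with $\overline{\phi}^{-i}$ because $\overline{\phi}$ fixes $t$.
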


\begin{proof} We denote by $B$ the kernel of the natural homomorphism
$\overline{G}\to \langle t\rangle $. Let us show that $B$ is
abelian. It is clear that $B$ is generated by the set $\{
a^{t^i}\; :\; a\in A, i\in \mathbb Z\} .$ Therefore, it is
sufficient to show that $[a^{t^i}, a^{t^j}]=1$ for any $a\in A$,
$i,j\in \mathbb Z$. Without loss of generality we can assume that
$i\ge j$. Moreover, conjugating by a suitable power of $t$, we can
assume $j=0$. In these settings, we have $$ \phi ^i([a^{t^i},a])=
[(\lambda ^i (a))^{t^i}, \lambda ^i (a)]=[\mu ^i(a), \lambda ^i
(a)]=1$$ as $A$ is abelian. Therefore, the element $[a^{t^i}, a]$
belongs to $N_i$ and thus its image in $\overline G$ is trivial.
\end{proof}

We note that in certain particular cases (including, for example,
non--Hopfian Baumslag--Solitar groups) Lemma 3.8 follows from a
result of Hirshon \cite{Hir}. As any abelian group is amenable and
the class of amenable groups is closed under group extensions,
Lemma 3.8 yields

\begin{corollary} The group $\overline{G} $ is amenable.
\end{corollary}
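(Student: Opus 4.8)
The plan is to read this off immediately from Lemma 3.8 together with two standard facts about the class $AG$. Lemma 3.8 exhibits $\overline{G}$ as an extension
$$1\to B\to \overline{G}\to \langle t\rangle \to 1,$$
in which $B$ is the abelian kernel of the projection $\overline{G}\to\langle t\rangle$ and the quotient $\langle t\rangle$ is cyclic, hence abelian as well.

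First I would recall that every abelian group is amenable; this is part of von Neumann's original observation that $AG$ contains all abelian groups \cite{Neu}, so both $B$ and $\langle t\rangle$ lie in $AG$. Second, I would invoke the closure of $AG$ under group extensions: whenever $1\to N\to H\to Q\to 1$ is exact with $N,Q\in AG$, one has $H\in AG$. Applying this to the sequence above, with $N=B$ and $Q=\langle t\rangle$, yields $\overline{G}\in AG$, which is exactly the assertion.

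There is no genuine obstacle remaining at this stage, since all of the substantive work already sits inside Lemma 3.8, where the commutator identity $[\mu^i(a),\lambda^i(a)]=1$ (together with passage to the limit group) is what forces the kernel $B$ to be abelian. Granting that, the only points deserving attention are to feed both amenability inputs into the correct slots of the extension — abelianness of $B$ for the kernel, cyclicity of $\langle t\rangle$ for the quotient — and to rely specifically on closure under \emph{extensions}. Closure of $AG$ under subgroups or quotients alone would not permit one to assemble $\overline{G}$ from its abelian layers, so the extension property is the essential ingredient.
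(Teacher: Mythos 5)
Your proof is correct and matches the paper's own argument exactly: the paper derives the corollary from Lemma 3.8 by the same two facts, namely that abelian (hence also cyclic) groups are amenable and that the class $AG$ is closed under group extensions. Nothing further is needed.
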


\begin{lemma} The group $G$ contains a non--abelian free subgroup.
\end{lemma}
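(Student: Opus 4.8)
The plan is to exhibit an explicit pair of elements and verify that they freely generate a free group by means of Britton's lemma, exactly as indicated for the Baumslag--Solitar groups in Example 2.2. Using condition 3) of Theorem 2.1, I would first fix an element $a\in A$ with $a\notin\lambda(A)\cup\mu(A)$. The claim is then that $u=t$ and $v=a^{-1}ta$ freely generate a free subgroup of rank $2$.

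To establish the claim I would take an arbitrary nonempty freely reduced word $w$ in $u^{\pm1}, v^{\pm1}$ and show that $w\neq 1$ in $G$. Substituting $u=t$, $u^{-1}=t^{-1}$, $v=a^{-1}ta$, $v^{-1}=a^{-1}t^{-1}a$ and collecting the powers of $a$ lying between consecutive occurrences of $t^{\pm1}$, one rewrites $w$ as a word
$$ w = a^{c_0}\, t^{\varepsilon_1} a^{c_1} t^{\varepsilon_2}\cdots t^{\varepsilon_n} a^{c_n}, \qquad \varepsilon_i=\pm 1, $$
in the HNN generators $A\cup\{t\}$, where $n$ equals the syllable length of $w$, since each letter $u^{\pm1}, v^{\pm1}$ contributes exactly one symbol $t^{\pm1}$. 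A short bookkeeping shows that every interior exponent $c_i$ (for $1\le i\le n-1$) lies in $\{0,\pm 1\}$, and, more importantly, that $c_i=0$ can occur only between two like-signed syllables, which simply merge. Consequently, at every junction where $\varepsilon_i=-\varepsilon_{i+1}$ — the only configuration producing a candidate pinch $t^{-1}a^{c_i}t$ or $t\,a^{c_i}t^{-1}$ — free reducedness of $w$ forces the intermediate exponent to be exactly $c_i=\pm 1$.

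Since $a\notin\lambda(A)\cup\mu(A)$, we have $a^{\pm1}\notin\lambda(A)$ and $a^{\pm1}\notin\mu(A)$; hence no subword $t^{-1}a^{c_i}t$ (with $c_i=\pm1$) has its middle factor in $\lambda(A)$, and no subword $t\,a^{c_i}t^{-1}$ has its middle factor in $\mu(A)$. Therefore $w$ contains no pinch, and as $n\ge 1$, Britton's lemma yields $w\neq 1$ in $G$; so $\langle u,v\rangle$ is free of rank $2$. The one delicate point, and the place where condition 3) is indispensable, is precisely the assertion that the exponent at every potential pinch is $\pm 1$: if an intermediate power $a^{c}$ with $|c|\ge 2$ were allowed to sit between oppositely-signed powers of $t$, it might accidentally land in $\lambda(A)$ or $\mu(A)$ and create a genuine pinch. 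Thus the argument hinges on verifying that conjugation by $a$ in $v=a^{-1}ta$ contributes only unit exponents at exactly those junctions, which is the main step I would carry out in detail.
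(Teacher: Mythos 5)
Your proposal is correct and takes exactly the paper's approach: the paper's proof likewise picks $a\in A\setminus(\lambda(A)\cup\mu(A))$ using condition 3) and asserts that $t$ and $a^{-1}ta$ freely generate a subgroup of rank $2$ by Britton's lemma. The only difference is that you carry out the Britton's-lemma bookkeeping (interior exponents in $\{0,\pm 1\}$, no pinches possible) which the paper leaves implicit, and your case analysis there is sound.
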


\begin{proof} According to the third condition of Theorem 2.1 there
exists
an element $a\in A\setminus (\lambda (A)\cup \mu (A))$. The
elements $t$ and $a^{-1}ta$ generate the free group of rank $2$ by
the Britton lemma on HNN--extensions.
\end{proof}

\begin{proof}[Proof of Theorem 2.1.] Let us note that the sequence $\{
N_i\}$ converges to $N$. Applying Corollary 3.9 and Theorem 3.4,
we obtain $\lim\limits_{i\to\infty } \alpha (F/N_i, X)=0$. On the
other hand, $F/N_i\cong G$, this means that $\alpha (G)=0$, i.e.,
$G$ is weakly amenable. Finally, $G$ is non--amenable according to
Lemma 3.10.
\end{proof}

\section{Common quotient groups of all non--elementary hyperbolic
groups.}

Let us recall just one of a number of equivalent definitions of
hyperbolicity. A group $G$ with a finite generating set $X$ is
{\it hyperbolic} (in the sense of Gromov) if its Cayley graph
$\Gamma =\Gamma (G,X)$ is a hyperbolic space with respect to the
natural metric. This means that any geodesic triangle in $\Gamma $
is $\delta $--thin for a fixed constant $\delta $, i.e., each of
its sides belongs to the closed $\delta $--neighborhood of the
union of other two sides.

It has been mentioned by Gromov \cite{MG} (see also \cite{HV}),
that an element $g$ of infinite order in a hyperbolic group $G$ is
contained in a unique maximal elementary subgroup $E_G(g)$ ({\it
elementary closure of $g$}). For a subgroup $H$ of a hyperbolic
group $G$, its elementarizer $E_G(H)$ is defined as $\cap E_G(h)$,
where $h$ ranges over all elements of infinite order in $H$. If
the subgroup $H$ is non--elementary, $E_G(H)$ is the unique
maximal finite subgroup of $G$ normalized by $H$ \cite[Proposition
1]{O1}; notice also that $E_G(G)$ is the kernel of the action of
$G$ on the hyperbolic boundary $\partial G$ induced by left
multiplication on $G$.

The following is the simplification of Theorem 2 from \cite{O1}
(see also \cite[Lemma 5.1]{OJA}).

\begin{lemma} Let $H_1, \ldots , H_k$ be non--elementary subgroups of
a
hyperbolic group $G$ such that $E_G(H_1)=\ldots =E_G(H_k)=1$. Then
there exists a non--elementary hyperbolic quotient $K$ of $G$ such
that the image of each subgroup $H_1, \ldots , H_k$ under the
natural epimorphism $G\to K$ coincides with $K$.
\end{lemma}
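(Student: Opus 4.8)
The plan is to realize $K$ as a quotient of $G$ built by imposing finitely many relations of the form $x_j = w$, where $x_j$ ranges over the generators of $G$ and $w$ is a carefully chosen element of one of the subgroups $H_i$. Such a relation forces the generator $x_j$ into the image of $H_i$ in the quotient; once every generator lies in the image of a given $H_i$, that image must be all of $K$. Since $G$ has finitely many generators and there are finitely many subgroups, only finitely many such relations are required, and I would introduce them one at a time.

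First I would set up the induction, writing $G=G_0\to G_1\to\cdots\to G_N=K$ for the sequence of successive quotients. The invariant to be maintained is that each $G_\ell$ is non--elementary hyperbolic and that the image of every $H_i$ in $G_\ell$ remains non--elementary with trivial elementarizer. The hypothesis $E_G(H_i)=1$ supplies the base case, and it is precisely the persistence of this invariant that allows the construction to proceed.

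At each step I would push one additional generator $x_j$ into the image of a single subgroup $H_i$. Because the image of $H_i$ in $G_\ell$ is non--elementary with trivial elementarizer, it contains elements of infinite order whose high powers provide arbitrarily long words having small self--overlaps and small overlaps with one another. Selecting $w\in H_i$ of this kind, the relator $x_j^{-1}w$ satisfies a small cancellation condition relative to the hyperbolic metric of $G_\ell$. Ol'shanskii's small cancellation theory over hyperbolic groups then yields that $G_{\ell+1}=G_\ell/\langle\langle x_j^{-1}w\rangle\rangle$ is again non--elementary hyperbolic and, crucially, that the images of all the $H_i$ remain non--elementary with trivial elementarizer. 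In $G_{\ell+1}$ the generator $x_j$ now coincides with $w$, which lies in the image of $H_i$.

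The main obstacle is the last clause of the inductive step: controlling the \emph{behavior of the subgroups} under each quotient map. Keeping $G_{\ell+1}$ hyperbolic after adjoining a single small cancellation relator is routine, but verifying that every image $H_i\to G_{\ell+1}$ stays non--elementary with trivial elementarizer --- so that the induction can continue to the next generator and the next subgroup --- is the technical core, and is exactly what Theorem~2 of \cite{O1} provides. The words $w$ must be chosen with sufficient independence, both within each $H_i$ and across the different $H_i$, so that none of these finiteness and non--elementariness properties is destroyed along the way.
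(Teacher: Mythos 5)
The paper does not actually prove this lemma: it is imported from the literature, being introduced as ``the simplification of Theorem 2 from \cite{O1} (see also \cite[Lemma 5.1]{OJA}).'' So the only meaningful comparison is between your sketch and Ol'shanskii's original argument, and indeed your strategy --- adjoin finitely many relations $x_j=w_{ij}$ with $w_{ij}\in H_i$, one for each pair (generator, subgroup), chosen so that the relators satisfy a small cancellation condition over the hyperbolic group, so that the quotient stays non-elementary hyperbolic while each generator is forced into each image --- is precisely the strategy behind Theorem 2 of \cite{O1}. As a plan, it is the right one.

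The difficulty is that your write-up is circular at the decisive point. The inductive step needs the assertion that, after adjoining a relator, each image of $H_i$ is still non-elementary with trivial elementarizer, and you justify this by saying it ``is exactly what Theorem~2 of \cite{O1} provides.'' But Theorem 2 of \cite{O1} \emph{is} the statement being proved here (the lemma is its simplification); it asserts the existence of the final common quotient, not the preservation of your invariant under a single small-cancellation quotient. What actually carries that step in \cite{O1} is the small cancellation machinery itself (the $C'(\lambda, c, \varepsilon, \mu, \rho)$-condition over a hyperbolic group and Ol'shanskii's lemmas on such quotients), and deploying it needs more care than your sketch indicates: high powers $g^N$ of a single infinite-order element are periodic words with enormous self-overlap, so they do \emph{not} have the small-overlap property you claim; one must instead use aperiodic words built from two non-commensurable infinite-order elements of $H_i$, and it is exactly here that non-elementarity and the hypothesis $E_G(H_i)=1$ enter. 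As written, your argument therefore either collapses to the same citation the paper makes --- in which case the induction is superfluous scaffolding --- or, read as a self-contained proof, has a gap at precisely the step where all the technical content lives.
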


\begin{corollary} Let $P_1, \ldots , P_k$ be non--elementary
hyperbolic
groups. Then there exists a non--elementary hyperbolic group $Q$
that is a homomorphic image of $P_i$ for every $i=1, \ldots , k$.
\end{corollary}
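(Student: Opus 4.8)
The plan is to realize all the $P_i$ simultaneously as non-elementary subgroups of a single hyperbolic group $G$ with trivial elementarizers, and then invoke Lemma 4.1 to collapse them onto a common quotient. The natural candidate for $G$ is the free product $P_1*\cdots*P_k$: it is hyperbolic, since a free product of finitely many hyperbolic groups is hyperbolic, and each factor $P_i$ is a non-elementary subgroup of it. The only hypothesis of Lemma 4.1 that is not automatic is the vanishing of the elementarizers $E_G(P_i)$, and securing this is the crux of the argument.

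First I would reduce to the case where the factors have no nontrivial finite normal subgroup. For each $i$, set $\bar P_i=P_i/E_{P_i}(P_i)$, where $E_{P_i}(P_i)$ is the kernel of the action of $P_i$ on its boundary, i.e.\ the maximal finite normal subgroup of $P_i$. Since we quotient by a finite normal subgroup, $\bar P_i$ is again a non-elementary hyperbolic group, and now $E_{\bar P_i}(\bar P_i)=1$. Because any homomorphism out of $\bar P_i$ pulls back to one out of $P_i$, it is harmless to replace each $P_i$ by $\bar P_i$ from the outset, and I then form $G=\bar P_1*\cdots*\bar P_k$.

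The main step is to verify that $E_G(\bar P_i)=1$ for every $i$. Since $\bar P_i$ is non-elementary, $E_G(\bar P_i)$ is the unique maximal finite subgroup of $G$ normalized by $\bar P_i$, so it suffices to show that any finite subgroup $F\le G$ normalized by $\bar P_i$ is trivial. For this I would use the action of the free product $G$ on its Bass--Serre tree $T$, whose edge stabilizers are trivial and whose nontrivial vertex stabilizers are the conjugates of the factors. A finite subgroup fixes a vertex, and since edge stabilizers are trivial a \emph{nontrivial} finite subgroup fixes a unique vertex; thus $F$ fixes a unique vertex $w$. As $\bar P_i$ normalizes $F$ it permutes the fixed-point set of $F$, hence fixes $w$, which forces $\bar P_i$ into the stabilizer of $w$. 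Comparing this with the fact that $\bar P_i$ already stabilizes its own vertex, together with the observation that (again by triviality of edge stabilizers) a nontrivial group cannot stabilize two distinct vertices, I conclude $F\le\bar P_i$ and $F\triangleleft\bar P_i$. A finite normal subgroup of $\bar P_i$ lies in $E_{\bar P_i}(\bar P_i)=1$, so $F=1$. This is precisely where the preliminary reduction is essential: without killing the finite radical, a factor carrying a nontrivial finite normal subgroup would give $E_G(\bar P_i)\ne1$, and Lemma 4.1 could not be applied.

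Finally I would apply Lemma 4.1 to $G$ with $H_i=\bar P_i$. It yields a non-elementary hyperbolic quotient $K$ of $G$ in which the image of each $\bar P_i$ is all of $K$. Setting $Q=K$, the composite $P_i\twoheadrightarrow\bar P_i\hookrightarrow G\twoheadrightarrow K$ is surjective for every $i$, so $Q$ is a non-elementary hyperbolic group that is a homomorphic image of each $P_i$, as required. I expect the Bass--Serre tree computation of $E_G(\bar P_i)$ to be the only genuine obstacle; hyperbolicity of the free product and the passage to $\bar P_i$ are routine.
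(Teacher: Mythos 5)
Your proposal is correct and follows essentially the same route as the paper: pass to $H_i = P_i/E_{P_i}(P_i)$ to kill the finite radical, form the free product, check that $E_G(H_i)=1$, and apply Lemma 4.1. The only difference is that you spell out, via the action on the Bass--Serre tree, the step the paper dismisses as ``easy to check'' (the triviality of $E_G(H_i)$), and your argument for it is sound.
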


\begin{proof} The proof can be extracted from the one of Theorem 2 in
\cite{OJA}. Here we provide it for convenience of the reader. Let
us set $H_i=P_i/E_{P_i}(P_i)$. Clearly $E_{H_i}(H_i)=1$, as
$E_{P_i}(P_i)$ is the maximal normal finite subgroup of $P_i$.
Moreover, since any quotient of a non--elementary hyperbolic group
modulo a finite normal subgroup is also a non--elementary
hyperbolic group \cite[Corollary 23(ii)]{GH}, it follows that
$H_i$ is non--elementary hyperbolic. Now we take the free product
$$G=H_1\ast \ldots \ast H_k.$$ It is easy to check that
$E_G(H_i)=1$ for every $i$ as there are no finite subgroups of $G$
normalized by $H_i$. It remains to apply Lemma 4.1.
\end{proof}

We need one more lemma (the proof can be found in \cite{O1}).

\begin{lemma} Let $G$ be a non--elementary hyperbolic group, $g$ an
element of $G$. Then there exists $N\in \mathbb N$ such that the
quotient group of $G$ modulo the normal closure of $g^N$ is
non--elementary and hyperbolic.
\end{lemma}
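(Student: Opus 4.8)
The plan is to split according to the order of $g$, reducing the nontrivial case to the small cancellation theory over hyperbolic groups developed in \cite{O1}. The finite-order case is immediate: if $g^m=1$ for some $m\in\mathbb N$, then choosing $N=m$ makes $g^N=1$, so the normal closure $\langle\langle g^N\rangle\rangle$ is trivial and the quotient is $G$ itself, which is non-elementary and hyperbolic by hypothesis.

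The substantive case is when $g$ has infinite order. Then $g$ lies in its elementary closure $E_G(g)$, a virtually infinite cyclic group in which $\langle g\rangle$ has finite index. The key observation is that a sufficiently high power $g^N$ becomes a relator satisfying an arbitrarily strong small cancellation condition over $G$: the pieces, that is, the maximal common subwords of distinct cyclic shifts of conjugates of $g^N$, have length bounded in terms of the hyperbolicity constant $\delta$ and the translation length of $g$, whereas the length of $g^N$ grows linearly with $N$. With this condition in force, the quotient theorem of \cite{O1} yields that $\overline G=G/\langle\langle g^N\rangle\rangle$ is hyperbolic once $N$ is large enough, and that the relation imposed is as short as possible, so that $\overline G$ is infinite and short elements of $G$ survive in $\overline G$ retaining their order.

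It then remains to check that $\overline G$ is non-elementary. Since $G$ is non-elementary, I would fix an element $h$ of infinite order with $E_G(h)\ne E_G(g)$, so that $g$ and $h$ fix disjoint pairs of points on $\partial G$. As $h$ is kept fixed while $N$ grows, the short element $h$ contains no long subword matching $g^N$, so its image in $\overline G$ still has infinite order and remains independent from the image of $g$; two independent infinite-order elements generate a non-elementary subgroup, whence $\overline G$ is non-elementary.

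The main obstacle is the hyperbolicity of $\overline G$, which is precisely the deep content of \cite{O1}: one must control van Kampen diagrams over $G$ whose cells are labeled by $g^N$ and verify that the linear isoperimetric inequality persists in the quotient. The small cancellation estimates and the analysis of contiguity subdiagrams are the heart of that argument. Once hyperbolicity is established, both the infiniteness and the non-elementarity of $\overline G$ follow from the explicit control that the small cancellation condition provides over short relations, as sketched above.
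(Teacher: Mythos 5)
Your high-level route matches the paper's, which in fact offers no proof at all: Lemma 4.3 is quoted from \cite{O1}, where it is established by the graded small cancellation machinery over hyperbolic groups. Your reduction of the finite order case (take $N$ equal to the order of $g$, so the quotient is $G$ itself) is correct and is genuinely needed, since the results of \cite{O1} concern powers of infinite order elements. The trouble is that in the two places where your sketch commits to concrete mathematics rather than deferring to \cite{O1}, both claims are wrong.

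First, the small cancellation claim is false as stated: pieces of the symmetrized set generated by $g^N$ are \emph{not} bounded in terms of $\delta$ and the translation length of $g$. The word $g^N$ is periodic, so distinct cyclic shifts of it overlap along subwords of length on the order of $(N-1)\|g\|_X$, and conjugates of $g^N$ by elements of $E_G(g)\setminus\langle g\rangle$ agree with $g^{\pm N}$ up to bounded error, hence overlap almost entirely. This is exactly why proper powers never satisfy the naive metric condition and why Ol'shanskii's condition is formulated to discount ``compatible'' occurrences (those where the identification of subwords extends to the whole relators), after first replacing $g$ by a power $g^{n_0}$ generating a normal --- indeed central --- subgroup of $E_G(g)$. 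Second, the non-elementarity argument collapses: the image of $g$ in $\overline G=G/\langle\langle g^N\rangle\rangle$ has order dividing $N$ --- killing it is the whole point of the construction --- so it cannot serve as one of your ``two independent infinite--order elements.'' Exhibiting a single surviving infinite order element $h$ shows only that $\overline G$ is infinite, which is compatible with $\overline G$ being elementary (virtually cyclic). One needs two elements of $G$, independent of $g$ and of each other, whose images both retain infinite order \emph{and} remain independent in $\overline G$; even the retention of infinite order requires the torsion theorem of \cite{O1} (new torsion in the quotient arises only from conjugates of images of powers of $g$), not your ``no long common subword'' reasoning, and the persistence of independence is not a bounded-length statement that injectivity on a ball can rule out, so it too must come from the quotient theorem itself (or from a growth versus hyperbolicity-constant estimate).
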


Now we are going to describe the main construction of the present
section.

\begin{theorem} There exists a $2$--generated infinite periodic group
$Q$
such that for every non--elementary hyperbolic group $H$, there is
an epimorphism $\rho : H\to Q$.
\end{theorem}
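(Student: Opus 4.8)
The plan is to realize $Q$ as a limit of non-elementary hyperbolic groups, obtained by imposing an increasing collection of relations on the free group of rank $2$. Fix $F_2=\langle x,y\rangle$ and enumerate the (countably many, since hyperbolic groups are finitely presented) isomorphism types of non-elementary hyperbolic groups as $H_1,H_2,\ldots$, and separately enumerate the elements of $F_2$ as $w_1,w_2,\ldots$. I will construct an ascending chain of normal subgroups $R_0\le R_1\le R_2\le\cdots$ of $F_2$ with each quotient $F_2/R_n$ non-elementary hyperbolic, and then set $R=\bigcup_n R_n$ and $Q=F_2/R$. By Example 3.3 the sequence $R_n$ converges to $R$, so $Q$ is genuinely a limit of hyperbolic groups. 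Since $Q$ is by construction a quotient of $F_2$, it is automatically $2$-generated, so that requirement comes for free.

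The induction alternates two kinds of steps, processing one group $H_n$ and one element $w_n$ in each round, starting from $R_0=1$ (so $F_2/R_0=F_2$ is non-elementary hyperbolic). \emph{Absorbing step:} given the current non-elementary hyperbolic quotient $K=F_2/R_n$, apply Corollary 4.2 to the pair $K,H_{n+1}$ to get a non-elementary hyperbolic group that is a homomorphic image of both. Being an image of $K=F_2/R_n$, it has the form $F_2/R'$ with $R'\ge R_n$, and by the construction in Corollary 4.2 the induced map $H_{n+1}\to F_2/R'$ is onto. \emph{Torsion step:} given the current quotient $F_2/R'$, consider the image $g$ of $w_{n+1}$; if $g$ has infinite order, use Lemma 4.3 to pass to the non-elementary hyperbolic quotient by the normal closure of a suitable power $g^N$, which again has the form $F_2/R_{n+1}$ with $R_{n+1}\ge R'$ and in which the image of $w_{n+1}$ has finite order; if $g$ already has finite order, set $R_{n+1}=R'$. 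Every quotient in the chain thus stays non-elementary hyperbolic and remains a quotient of $F_2$.

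With $Q=F_2/R$ in hand, the two easy properties are immediate. For the \emph{common-quotient property}, each non-elementary hyperbolic $H\cong H_n$ was absorbed at some round, yielding an epimorphism $H_n\to F_2/R_m$; composing with the quotient map $F_2/R_m\twoheadrightarrow Q$ gives an epimorphism $H\to Q$. For \emph{periodicity}, every element of $Q$ is the image of some $w_j$, and the torsion step processing $w_j$ forced $w_j^{N}\in R$ for some $N$, so the image of $w_j$ in $Q$ has finite order.

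The real difficulty is \emph{infiniteness}: a priori the ascending union of relations could collapse $Q$ to a finite group, even though each finite stage $F_2/R_n$ is infinite. To rule this out I would carry along control of the injectivity radius, arranging at the $n$-th step that the canonical epimorphism $F_2/R_{n-1}\to F_2/R_n$ is injective on the ball of radius $n$ in the word metric of $F_2$. This is exactly the control supplied by the constructions underlying Lemma 4.1, Corollary 4.2 and Lemma 4.3 in \cite{O1}: in the torsion step one takes the killed power $g^N$ with $N$ large, and in the absorbing step one imposes only sufficiently long defining relations, so that short elements survive. Granting this, the ball of radius $r$ in $Q$ coincides with the ball of radius $r$ in $F_2/R_r$, which contains more than $r$ elements since that group is infinite; letting $r\to\infty$ shows that $Q$ is infinite. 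This radius bookkeeping, rather than any single step, is where the main work lies.
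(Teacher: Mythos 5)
Your construction coincides with the paper's: start from a $2$--generated non--elementary hyperbolic group, alternately absorb the next hyperbolic group in the enumeration via Corollary 4.2 and kill a power of the next element via Lemma 4.3, then pass to the union of an increasing chain of kernels. The $2$--generation, periodicity and common--quotient arguments are exactly as in the paper and are fine. The divergence, and the gap, is in the proof of infiniteness.

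Your injectivity--radius bookkeeping is not supplied by the lemmas you invoke, and at one point it actually conflicts with them. Lemma 4.1, Corollary 4.2 and Lemma 4.3 as stated assert only the existence of the relevant quotients; they say nothing about the quotient maps being injective on prescribed balls. Worse, the paper's proof of Corollary 4.2 begins by replacing each $P_i$ by $P_i/E_{P_i}(P_i)$, i.e.\ by killing its maximal finite normal subgroup. So in your absorbing step the map $K=F_2/R_n\to F_2/R'$ factors through $K/E_K(K)$, and $E_K(K)$ may be a nontrivial finite subgroup consisting of \emph{short} elements: these relations are not ``sufficiently long,'' and injectivity on the ball of radius $n$ fails unless you also maintain $E_K(K)=1$ throughout the induction --- a property that neither your torsion step nor your absorbing step is shown to preserve. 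One could push this through using the full strength of the residualing theorems of \cite{O1} (where ``residualing'' means precisely injective on a prescribed finite set), but that is substantial extra work your sketch does not do. The paper avoids all of it with a soft argument you missed: if $Q=F_2/R$ were finite, it would be finitely presented, so $R$ would be the normal closure of finitely many elements, each lying in some $R_i$; hence $R=R_i$ for all large $i$, and then the non--elementary hyperbolic group $F_2/R_i$ would coincide with (in particular, be a quotient of) the finite group $Q$ --- a contradiction. That argument needs nothing beyond the lemmas as stated, which is exactly why the paper never has to quantify injectivity radii.
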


\begin{proof}  Since any hyperbolic group is finitely presented, the
set
of all non--elementary hyperbolic groups is countable. Let us
enumerate this set $G_1, G_2, \ldots $ and elements of the first
group $G_1=\{ g_1, g_2, \ldots \}$. Consider the following
diagram, which is constructed by induction. $$
\begin{array}{cccccccccccccc}
G_1 &&&& G_2 &&\ldots &&& G_{k} &&&& \ldots \\
\Big\downarrow\vcenter{\rlap{$\scriptstyle{\pi _1}$}} && &&
\Big\downarrow\vcenter{\rlap{$\scriptstyle{\pi _2}$}} && && &
\Big\downarrow\vcenter{\rlap{$\scriptstyle{\pi _{k}}$}} &&&& \\
Q_1 & \stackrel{\psi _1}{\longrightarrow} & R_1 & \stackrel{\phi
_2}{\longrightarrow} & Q_2 & \stackrel{\psi _2}{\longrightarrow} &
\ldots &  R_{k-1} & \stackrel{\phi _k}{\longrightarrow} & Q_{k} &
\stackrel{\psi _{k}}{\longrightarrow} & R_k & \stackrel{\phi
_{k+1}}{\longrightarrow} & \ldots
\end{array}
$$ Suppose $G_1=Q_1$ and let $\pi _1 $ denote the corresponding
natural isomorphism. Assume that we have already defined the
groups $Q_{i}$, $R_{i-1}$ and homomorphisms $\phi _i: R_{i-1}\to
Q_i$, $\psi _{i-1}: Q_{i-1}\to R_{i-1}$ for all $i\le k$. Denote
by $\tau _{k}: G_1\to Q_{k}$ the composition $ \phi _k\psi _{k-1}
\ldots \phi _2\psi _1 \pi _1$ and by $\bar g_i$ the image of $g_i$
in $Q_{k}$ under $\tau _k$. According to Lemma 4.3, there exists
$N_i\in \mathbb N$ such that the quotient $Q_{k}/\langle\bar
g_i^{N_i}\rangle ^{Q_{k}}$ is a non--elementary hyperbolic group.
We set $$R_{k}=Q_{k}/\langle\bar g_i^{N_i}\rangle ^{Q_{k}}$$ and
denote by $\psi _{k}$ the natural homomorphism from $Q_{k} $ to
$R_{k}$. Further, by Corollary 4.2, there is a non--elementary
hyperbolic group $Q_{k+1}$ such that there exist epimorphisms
$$\phi _{k+1}: R_{k}\to Q_{k+1}\;\;\; {\rm and}\;\;\; \pi _{k+1} :
G_{k+1}\to Q_{k+1}. $$ The inductive step is completed.

Let us denote by $U_k$ the kernel of $\tau _k$. Evidently we have
$\{ 1\} = U_1 \le U_2\le \ldots $. Set
$U=\bigcup\limits_{i=1}^\infty U_i$ and consider the quotient
group $Q=G_1/U$. Note that one can assume $G_1$ to be 2--generated
without loss of generality. Further, $Q$ is a quotient group of
$Q_i$ for all $i$, hence $Q$ is a quotient of $G_i$ for all $i$.
The periodicity of $Q$ follows directly from our construction. It
remains to show that $Q$ is infinite. To do this, let us suppose
that $Q$ is finite. Then $Q$ is finitely presented. Therefore,
$Q_i$ is a quotient group of $Q$ for all $i$ big enough. In
particular, $Q_i$ is elementary whenever $i$ is sufficiently big
and we get a contradiction. The theorem is proved.
\end{proof}

Let us denote by $\mathcal {H}_m$ the subset of all $N\in \mathcal
G_m$ such that $F_m/N$ is non-elementary and hyperbolic. Recall
also that $\mathcal {AG}_m$ denotes the subset of all $N\in
\mathcal G_m$ such that $F_m/N$ is amenable. The following two
observations from \cite{Osin} plays the crucial role in the
studying of the group $Q$.

\begin{theorem} For every $m\ge 2$, the intersection of the closure of
$\mathcal {H}_m $ (with respect to the Cayley topology on
$\mathcal G_m$) and $\mathcal {AG}_m$ is non--empty.
\end{theorem}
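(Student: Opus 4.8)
The plan is to realise the required point of $\overline{\mathcal H_m}\cap\mathcal{AG}_m$ as a limit of an increasing chain of non-elementary hyperbolic quotients of $F_m$. Concretely, I would try to construct normal subgroups $\{1\}=N_1\le N_2\le\cdots$ of $F_m$ so that every quotient $F_m/N_k$ is non-elementary hyperbolic while the quotient $F_m/N$ by the union $N=\bigcup_k N_k$ is amenable. If this succeeds then each $N_k\in\mathcal H_m$, and by Example 3.3 the increasing sequence $\{N_k\}$ converges in the Cayley topology to $N$; hence $N\in\overline{\mathcal H_m}$, while amenability of $F_m/N$ gives $N\in\mathcal{AG}_m$, so $N$ is the common point sought. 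The chain can be started because $\{1\}\in\mathcal H_m$: for $m\ge 2$ the free group $F_m$ is itself non-elementary hyperbolic, its Cayley graph being a tree.

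To build the chain I would use the hyperbolic surgery developed earlier in this section. Given a non-elementary hyperbolic group $G_k=F_m/N_k$, Lemma 4.3 allows me to enlarge $N_k$ by adjoining a suitable proper power of a chosen element and to remain within the class of non-elementary hyperbolic groups, and Corollary 4.2 together with Lemma 4.1 provide additional freedom to pass to common non-elementary hyperbolic quotients when several relations must be handled at once. Thus preserving non-elementary hyperbolicity at each finite stage is not where the trouble lies; the real issue is which relations to add.

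The hard part is to schedule these relations so that the \emph{limit} $F_m/N$ is amenable. This is a genuine tension: every stage $F_m/N_k$ is non-elementary hyperbolic, hence contains non-abelian free subgroups and has exponential growth, so all of this non-amenable behaviour must disappear only in passing to the limit. A single power relation, as in the construction of the group $Q$ in Theorem 4.5, merely forces periodicity and does not yield amenability --- indeed $Q$ has property (T) and is as far from amenable as possible. My plan would instead be to adjoin relators whose lengths grow very rapidly along the chain: this rapid growth keeps each newly imposed relator long compared with the geometry already fixed, so that Lemma 4.3 (or a small-cancellation argument) preserves non-elementary hyperbolicity, while at the same time it lets one prescribe the isomorphism type of the balls that have already stabilised and thereby steer the limit into a specific amenable --- ideally elementary amenable --- isomorphism type, whose amenability can then be read off directly.

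The main obstacle is exactly this simultaneous control: one must guarantee amenability of the non-finitely-presented limit $F_m/N$ while keeping all finite stages non-elementary hyperbolic, i.e.\ one must arrange for the free subgroups and the exponential growth to be annihilated precisely in the limit and not before. Making such a schedule of relators explicit, and verifying that the resulting direct limit is genuinely amenable, is the technical heart of the statement.
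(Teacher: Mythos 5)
Your reduction is sound as far as it goes: an increasing chain $\{1\}=N_1\le N_2\le\cdots$ with each $F_m/N_i$ non-elementary hyperbolic converges, by Example 3.3, to $N=\bigcup_i N_i$, so it would indeed suffice to arrange that $F_m/N$ is amenable. But your proposal stops exactly where the theorem begins, and the heuristic you offer for the missing step points in the wrong direction. Adjoining relators of rapidly growing length controls hyperbolicity of the finite stages --- the half you yourself call unproblematic --- but it exerts no control whatsoever over amenability of the limit. Limits produced by this kind of iterated surgery are, if anything, the opposite of amenable: the group $Q$ of Theorem 4.4 is built precisely this way and has property (T); Tarski monsters and free Burnside groups arise the same way, and $B(m,n)$ is not even weakly amenable by \cite{Osin2}. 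Killing free subgroups or forcing periodicity in the limit does not move you toward amenability, so ``steering the limit into an amenable isomorphism type'' is not a technique, it is a restatement of the claim to be proved. Note also a constraint your sketch ignores: the amenable limit must be infinitely presented, since if $F_m/N$ were finitely presented then every $N_i$ sufficiently close to $N$ would contain $N$, making the non-elementary hyperbolic group $F_m/N_i$ a quotient of an amenable group, a contradiction.

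The actual proof (this paper does not reprove Theorem 4.5; it imports it from \cite{Osin}) runs your argument backwards: one starts from an explicit amenable, infinitely presented group and observes that its truncated presentations are hyperbolic for structural reasons having nothing to do with small cancellation. Take the lamplighter group $L=\mathbb{Z}_2\wr\mathbb{Z}=\langle a,t \mid a^2=1,\ [a,a^{t^i}]=1,\ i\in\mathbb{N}\rangle$, and let $L_n$ be the group presented by the relations with $i\le n$ only. Then $L_n$ is the HNN extension of the finite group $\langle a, a^t,\ldots,a^{t^n}\rangle\cong\mathbb{Z}_2^{n+1}$ with associated subgroups isomorphic to $\mathbb{Z}_2^{n}$ and stable letter $t$; an HNN extension of a finite group is virtually free, hence hyperbolic, and $L_n$ is non-elementary because it surjects onto $L$, which is not virtually cyclic. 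The kernels of $F_2\to L_n$ form an increasing chain whose union is the kernel of $F_2\to L$, so by Example 3.3 that kernel lies in the closure of $\mathcal{H}_2$, and it lies in $\mathcal{AG}_2$ because $L$ is (locally finite)--by--cyclic, hence amenable. For $m>2$ one repeats the argument with redundant generators. What your proposal is missing, then, is not a finer scheduling of small-cancellation relators but the choice of a concrete amenable target whose finite approximations are hyperbolic by Bass--Serre theory over finite groups.
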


\begin{lemma} Suppose that $G$ is a finitely generated group and $\phi
:
G\to P$ is a surjective homomorphism onto a group $P$. Then
$\alpha (G) \ge \alpha (P)$.
\end{lemma}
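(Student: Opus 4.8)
The plan is to argue one finite generating set at a time. I would fix a finite generating set $S$ of $G$ and observe that, since $\phi$ is onto, the finite set $\phi(S)$ generates $P$, so $\alpha(P,\phi(S))\ge \alpha(P)$. Rewriting the definition, $\alpha(G,S)=\inf_{\|v\|=1}\max_{s\in S}\|sv-v\|$ (the $\max$ exists because $S$ is finite), and similarly for $P$. Hence it suffices to prove the comparison $\alpha(G,S)\ge \alpha(P,\phi(S))$; taking the infimum over all finite generating sets $S$ of $G$ then yields $\alpha(G)\ge \alpha(P)$. Note that $P$ is automatically finitely generated, so $\alpha(P)$ is defined.

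To establish $\alpha(G,S)\ge \alpha(P,\phi(S))$, I would start from an arbitrary unit vector $v\in L^2(G)$ and manufacture a unit vector $w\in L^2(P)$ satisfying $\|\phi(s)w-w\|\le\|sv-v\|$ for every $s\in S$. The natural candidate is the fiberwise $L^2$-mass: set
$$w(p)=\Big(\sum_{g\in\phi^{-1}(p)}|v(g)|^2\Big)^{1/2}.$$
Summing over the fibers $\phi^{-1}(p)$, which partition $G$, immediately gives $\|w\|^2=\|v\|^2=1$, so $w$ is a genuine unit vector in $L^2(P)$.

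The crux is the norm estimate. Writing the left regular representation as $(sv)(g)=v(s^{-1}g)$, one checks that the fiber of $\phi$ over $\phi(s)^{-1}p$ is exactly $s^{-1}\phi^{-1}(p)$, whence $(\phi(s)w)(p)=\|(sv)|_{\phi^{-1}(p)}\|_2$. Applying the reverse triangle inequality $\big|\,\|a\|-\|b\|\,\big|\le\|a-b\|$ on each fiber, with $a=(sv)|_{\phi^{-1}(p)}$ and $b=v|_{\phi^{-1}(p)}$, and then summing over $p$, gives
$$\|\phi(s)w-w\|^2=\sum_{p}\big|\,\|(sv)|_{\phi^{-1}(p)}\|_2-\|v|_{\phi^{-1}(p)}\|_2\,\big|^2\le\sum_{p}\|(sv-v)|_{\phi^{-1}(p)}\|_2^2=\|sv-v\|^2.$$
Taking the maximum over $s\in S$ then shows $\max_{s\in S}\|sv-v\|\ge\max_{s\in S}\|\phi(s)w-w\|\ge\alpha(P,\phi(S))$; since $v$ was arbitrary, $\alpha(G,S)\ge\alpha(P,\phi(S))$, completing the reduction.

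I expect the main obstacle to be isolating the correct map $L^2(G)\to L^2(P)$. The obvious linear averaging over cosets fails when $\ker\phi$ is infinite, and in any case need not decrease the displacement. The fiberwise $L^2$-norm is the right device: it is nonlinear but norm-preserving, and its only interaction with the group action — that the fiber over $\phi(s)^{-1}p$ is the $s^{-1}$-translate of the fiber over $p$ — is precisely what makes the reverse triangle inequality applicable termwise. Once this equivariance of the fibers is observed, the remaining steps are routine bookkeeping.
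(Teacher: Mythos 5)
Your proof is correct, and every step checks out: the reduction to showing $\alpha(G,S)\ge\alpha(P,\phi(S))$ for each finite generating set $S$ of $G$ is the right bookkeeping (only the push-forward direction is needed, since $\phi(S)$ is itself one of the generating sets over which the infimum defining $\alpha(P)$ is taken), and your fiberwise $\ell^2$-mass map is norm-preserving and displacement-non-increasing exactly as you argue. One caveat on the comparison: the paper does not prove this lemma at all --- it is stated as one of ``two observations from \cite{Osin}'' --- so there is no internal proof to measure your argument against; yours fills a gap the paper delegates to that reference. The construction you use, namely the nonlinear map $w(p)=\bigl(\sum_{g\in\phi^{-1}(p)}|v(g)|^2\bigr)^{1/2}$ combined with the fiber equivariance $\phi^{-1}(\phi(s)^{-1}p)=s^{-1}\phi^{-1}(p)$ and the reverse triangle inequality applied fiber by fiber, is the standard device for transferring almost-invariant vectors of regular representations to quotients, and your closing remark identifies the right subtlety: linear averaging over the (possibly infinite) fibers of $\phi$ is unavailable, and it is precisely the square-root map that makes the transfer work while preserving unit norm.
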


Now we want to show that the group $Q$ from Theorem 4.4 has all
properties listed at Theorem 2.3.

\begin{proof}[Proof of Theorem 2.3]. By Theorem 3.4 and Theorem 4.5,
there
is a sequence of elements $N_i\in \mathcal H_2$, $i\in \mathbb N$,
such that
\begin{equation}
\lim\limits_{i\to\infty }\alpha (F_2/N_i)=0. \label{GiXi}
\end{equation}
Let us denote by $G_i$ the quotient group $F_2/N_i$. According to
Theorem 4.4, there exists an epimorphism $\rho _i: G_i\to Q$ for
every $G_i$. Combining Lemma 4.6 and (\ref{GiXi}), we obtain
$\alpha(Q)=0.$ As is well known, there are non--elementary
hyperbolic groups having property $(T)$ of Kazhdan (for instance,
uniform lattices in $Sp(n,1)$). Since the class of Kazhdan groups
is closed under the taking of quotients, the group $Q$ has the
property $T$. Recall that any discrete amenable Kazhdan group is
finite; taking into account the infiniteness of $Q$, we conclude
that $Q$ is non--amenable.
\end{proof}

In conclusion we discuss certain relations with growth functions
of hyperbolic groups. The {\it growth function} $\gamma_G^X :
\mathbb N \longrightarrow \mathbb N$ of a group $G$ generated by a
finite set $X$ is defined by the formula $$\gamma _G^X(n)=card\;
\{ g\in G\; :\; ||g||_X\le n\} ,$$ where $||g||_X$ denotes the
word length of $g$ relative to $X$. The {\it exponential growth
rate} of $G$ with respect to $X$ is the number $$\omega (G,X) =
\lim_{n \to \infty} \sqrt[n]{\gamma _G^X(n)} .$$ The above limit
exists by submultiplicativity of $\gamma_G^X $. The group $G$ is
said to be of {\it exponential growth } (respectively of {\it
subexponential growth }) if $\omega (G,X)>1$ (respectively $\omega
(G,X)=1$) for some generating set $X$.

It is easy to see that above definitions are independent of the
choice of a generating set in $G$. Let us consider the quantity $$
\omega (G) = \inf\limits_X \omega (G,X),$$ where the infimum is
taken over all finite generating sets of $G$. One says that $G$
has uniform exponential growth if
\begin{equation}
\omega (G)>1. \label{w}
\end{equation}
It is an open question whether any group of exponential growth
satisfies (\ref{w}). We observe that Theorem 4.4 provides an
approach to the solution of this problem.

\begin{lemma} Let $G$ be a group generated by a finite set $X$ and
$\phi :
G\to P$ be an epimorphism. Then $\omega (G,X)\ge \omega (P, \phi
(X))$.
\end{lemma}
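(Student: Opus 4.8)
The plan is to establish the stronger pointwise inequality $\gamma_P^{\phi(X)}(n) \le \gamma_G^X(n)$ between the two growth functions for every $n \in \mathbb{N}$, and then to pass to the limit. The entire argument rests on the elementary fact that the epimorphism $\phi$ maps the ball of radius $n$ in $G$ \emph{onto} the ball of radius $n$ in $P$.

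First I would check that $\phi$ does not increase word length: writing $g \in G$ as a shortest word $g = x_{i_1}^{\varepsilon_1}\cdots x_{i_k}^{\varepsilon_k}$ with $x_{i_j} \in X$, $\varepsilon_j = \pm 1$, and $k = \|g\|_X$, and applying $\phi$ letter by letter exhibits $\phi(g)$ as a word of length $k$ over $\phi(X) \cup \phi(X)^{-1}$, so that $\|\phi(g)\|_{\phi(X)} \le \|g\|_X$. The key converse point is that every $p \in P$ with $\|p\|_{\phi(X)} \le n$ has a preimage in the ball of radius $n$ in $G$: a shortest expression $p = \phi(x_{i_1})^{\varepsilon_1}\cdots \phi(x_{i_k})^{\varepsilon_k}$ with $k \le n$ can be read back as the word $g = x_{i_1}^{\varepsilon_1}\cdots x_{i_k}^{\varepsilon_k}$ in $X$, and this $g$ satisfies $\phi(g) = p$ and $\|g\|_X \le k \le n$. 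Hence $\phi$ restricts to a surjection from $\{g \in G : \|g\|_X \le n\}$ onto $\{p \in P : \|p\|_{\phi(X)} \le n\}$.

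Because a surjection cannot increase cardinality, this yields $\gamma_P^{\phi(X)}(n) \le \gamma_G^X(n)$ for every $n$. Taking $n$-th roots and letting $n \to \infty$ --- both limits existing by the submultiplicativity of the growth functions noted above --- gives $\omega(P, \phi(X)) \le \omega(G, X)$, which is the assertion of the lemma.

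I do not expect a genuine obstacle here, but the one point deserving care is the meaning of the word metric on $P$ when the family $\phi(X)$ is degenerate, i.e. when some generator maps to $1$ or two generators have the same image in $P$. Interpreting $\|\cdot\|_{\phi(X)}$ as the minimal length of a word over the family $\phi(X)$ (equivalently, over the set $\phi(X) \setminus \{1\}$), the lifting construction above goes through verbatim, so the inequality is unaffected. The content of the lemma is simply that passing to a quotient can only identify elements, never create new ones of small length.
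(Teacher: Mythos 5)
Your proof is correct and takes exactly the route the paper intends: the paper's own ``proof'' is the one-line remark that the lemma ``follows easily from the inequality $\| g\| _X\ge \| \phi (g) \| _{\phi (X)}$,'' with all details left to the reader, and your argument --- lifting a shortest $\phi(X)$-word for $p\in P$ to an $X$-word in $G$ of the same length, concluding that balls surject onto balls, hence $\gamma_P^{\phi(X)}(n)\le\gamma_G^X(n)$, and passing to the limit --- is precisely the standard filling-in of those details. Your extra remark about degenerate generating families (generators mapping to $1$ or colliding in $P$) is a sensible bit of care that the paper silently glosses over, but it changes nothing of substance.
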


\begin{proof} This observation is well known and quite trivial. The
proof
follows easily from the inequality $\| g\| _X\ge \| \phi (g) \|
_{\phi (X)}$. We leave details to the reader.
\end{proof}

Obviously Lemma 4.7 and Theorem 4.2 yield the following.

\begin{corollary} Suppose that for every $\varepsilon >0$, there exists
a
non--elementary hyperbolic group $H$ such that $\omega (H)<
1+\varepsilon $. Then the group $Q$ from Theorem 4.1 has
non--uniform exponential growth, i.e., $\omega (Q,X)>1$ for any
finite generating set $X$ of $Q$ but $\omega (Q)=1$.
\end{corollary}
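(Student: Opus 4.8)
The plan is to verify the two assertions separately, since they rest on completely different ingredients: the uniform lower bound $\omega(Q,X)>1$ comes from the non--amenability of $Q$, while the vanishing of the infimum $\omega(Q)=1$ comes from the hypothesis together with the growth--monotonicity of Lemma 4.7.

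First I would establish that $\omega(Q,X)>1$ for \emph{every} finite generating set $X$. Recall from Theorem 2.3 that $Q$ is non--amenable. By the classical theorem that every finitely generated group of subexponential growth is amenable, $Q$ must be of exponential growth. Since the type of growth (exponential versus subexponential) does not depend on the chosen finite generating set, it follows that $\omega(Q,X)>1$ for every finite generating set $X$ of $Q$. I would stress that this lower bound cannot be imported from the hyperbolic groups mapping onto $Q$: passing to a quotient may only decrease the growth rate (Lemma 4.7), so non--amenability is genuinely needed at this step.

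Next I would prove $\omega(Q)=1$. Fix $\varepsilon>0$ and, using the hypothesis, choose a non--elementary hyperbolic group $H$ with $\omega(H)<1+\varepsilon$. By definition $\omega(H)=\inf_Y\omega(H,Y)$, so there is a finite generating set $Y$ of $H$ with $\omega(H,Y)<1+\varepsilon$. Theorem 4.4 supplies an epimorphism $\rho:H\to Q$; then $\rho(Y)$ is a finite generating set of $Q$, and Lemma 4.7 gives
\[
\omega(Q,\rho(Y))\le\omega(H,Y)<1+\varepsilon .
\]
Hence $\omega(Q)=\inf_X\omega(Q,X)\le\omega(Q,\rho(Y))<1+\varepsilon$. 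As $\varepsilon>0$ was arbitrary and $\omega(Q)\ge 1$ always holds, we conclude $\omega(Q)=1$.

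Combining the two parts yields precisely non--uniform exponential growth. The only non--routine point is the first part: one must invoke the (standard but non--trivial) implication ``subexponential growth $\Rightarrow$ amenability'' in order to rule out $\omega(Q,X)=1$, because the construction of $Q$ delivers only \emph{upper} bounds on its growth rates through the hyperbolic quotients. The second part is a direct chase through the definitions of $\omega(H)$ and $\omega(Q)$ together with the inequality of Lemma 4.7, so I expect no obstacle there.
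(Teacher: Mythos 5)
Your proof is correct and follows essentially the same route as the paper: the equality $\omega(Q)=1$ is obtained exactly as you do it, by feeding the hypothesis into Theorem 4.4 (the common-quotient property of $Q$) and the monotonicity Lemma 4.7. The paper's entire justification is the one line ``Obviously Lemma 4.7 and Theorem 4.2 yield the following,'' so your explicit handling of the lower bound $\omega(Q,X)>1$ --- via non-amenability of $Q$ from Theorem 2.3 together with the classical implication that subexponential growth forces amenability --- fills in a step the paper leaves implicit rather than departing from its approach.
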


\bibliographystyle{amsalpha}

\end{document}